\pdfoutput=1
\documentclass[11pt]{article}
\usepackage{amsmath,amsfonts,amstext,latexsym,amssymb,amsbsy,amsopn,amsthm,eucal,slashed,enumerate}
\usepackage{mathrsfs}
\usepackage{cases}
\usepackage{array}
\usepackage{color}
\usepackage{txfonts}
\usepackage{graphicx}
\usepackage{comment}
\allowdisplaybreaks[4]
\usepackage[colorlinks,
linkcolor=blue,
anchorcolor=blue,
citecolor=blue]{hyperref}

\newtheorem{theorem}{Theorem}[section]
\newtheorem{proposition}[theorem]{Proposition}
\newtheorem{lemma}[theorem]{Lemma}
\newtheorem{corollary}[theorem]{Corollary}

\numberwithin{equation}{section}
\theoremstyle{remark}
\newtheorem{remark}[theorem]{Remark}
\newtheorem{theoremalph}{Theorem}

\newcommand{\Ric}{\operatorname{Ric}}
\newcommand{\supp}{\operatorname{supp}}

\newcommand{\vol}{\operatorname{Vol}}
\newcommand{\dd}{\,\mathrm d}
\newcommand{\Q}{\mathcal Q}
\newcommand{\cL}{\mathcal L}

\newcommand{\bta}{\begin{theoremalph}}
	\newcommand{\eeta}{\end{theoremalph}}

\begin{document}

	\title
	{Differential Harnack Estimates for the Filtration Equations on Riemannian Manifolds via Nash--Moser Iteration}
	
	\author{Jian-Hua Hao \quad Yu-Zhao Wang\footnote{Y.-Z. Wang was supported by the Fundamental Research Program of Shanxi Province (Grant No. 202303021211001).} }
	\date{}
	\maketitle

	\begin{abstract}
		We prove local differential Harnack estimates for smooth positive solutions of
		the Filtration Equations $u_t=\Delta F(u)$ on complete Riemannian manifolds in uniformly parabolic
		ranges.  A Bochner--discriminant argument gives a coercive positive-part
		inequality, which is closed by Nash--Moser iteration.  We derive Harnack and
		Liouville consequences, recover the Aronson-B\'enilan estimates for porous medium equation and fast diffusion equation, and
		describe a class of genuinely non-power filtration laws.
	\end{abstract}
	
	\noindent\textbf{2020 Mathematics Subject Classification.} Primary 58J35, 35K10; Secondary 53C21, 35B45.
	
	\noindent\textbf{Keywords.} Filtration equations; differential Harnack estimates; Nash--Moser iteration; Ricci curvature, Riemannian manifold.
	
	\section{Introduction and main results}\label{sec:intro}
	
	Gradient estimates and differential Harnack inequalities connect the geometry of a
	Riemannian manifold with the space--time behavior of positive solutions of parabolic
	equations.  The subject is rooted in the elliptic gradient estimate of Cheng--Yau
	\cite{ChengYau1975} and the parabolic theory of heat kernels and Harnack inequalities \cite{LiYau1986} ;
	see \cite{Davies1989, PeterLi}.  For the positive solutions to the heat equation on complete  Riemanian manifolds with nonnegative Ricci curvature , Li and Yau
	\cite{LiYau1986} proved  the following classic 	differential Harnack estimate(Li-Yau estimate)\begin{equation}\label{eq:LY-intro}
		\frac{|\nabla u|^2}{u^2}-\frac{u_t}{u}\le\frac{n}{2t}.
	\end{equation}
	For the nonlinear diffusion equation $u_t=\Delta u^m$, the corresponding model is the
	Aronson--B\'enilan estimate \cite{AronsonBenilan1979,Vazquez2007}.  More precisely, if $m\neq1$ and
	\[
	v_m=\frac{m}{m-1}u^{m-1},
	\]
	then, in the Aronson--B\'enilan range $m>1-2/n$,
	\begin{equation}\label{eq:AB-intro}
		\Delta v_m\ge-\frac{\kappa_m}{t},
		\qquad
		\kappa_m=\frac{n}{n(m-1)+2}.
	\end{equation}
	Thus Li--Yau and Aronson--B\'enilan estimates express the same principle: a
	suitably normalized differential quantity has a favorable parabolic evolution.
	The corresponding geometric theory for the PME and FDE was developed further,
	in particular, by Lu--Ni--V\'azquez--Villani \cite{LNVV2009}; subsequent
	gradient and entropy estimates include
	\cite{HHL2013,HuangLi2014, WangHao2026, Wang2018Entropy,Zhu2011,Zhu2013}.
	
	We study the general filtration equation
	\begin{equation}\label{eq:general-equation-intro}
		u_t=\Delta F(u),\qquad F'(u)>0.
	\end{equation}
	For general $F$, the central issue is to identify a constitutive condition that
	survives the chosen proof mechanism.  Differential Harnack estimates in this
	direction go back to Yau \cite{Yau1994}; related maximum-principle results for
	equations with forcing and for weighted geometric settings can be found in
	\cite{MaZhaoSong2008,Wang2017Acta,TaheriVahidifar2026}.
	
	Two results are the immediate starting points of the present paper.  The first is
	Yau's differential Harnack estimate for a general filtration law.
	
	\bta[Yau {\cite{Yau1994}}]\label{thm:Yau-intro}
	Let $(M^n,g)$ be compact with $\Ric\ge0$, and let $u>0$ be a smooth
	solution of \eqref{eq:general-equation-intro}.  Define $G$ by
	\[
	G'(r)=\frac{F'(r)}r.
	\]
	Assume that, on the range of $u$,
	\begin{equation}\label{eq:Yau-structure-intro}
		F'(r)>0,
		\qquad \frac2nF'(r)+rF''(r)\ge0.
	\end{equation}
	Let $c(t)\ge0$ be smooth with $c'(t)\ge0$.  If
	$|\nabla G(u)|^2-2\partial_tG(u)-c(t)$ is smooth and nonpositive at
	the initial time, then
	\begin{equation}\label{eq:Yau-estimate-intro}
		|\nabla G(u)|^2-2\partial_tG(u)-c(t)\le0.
	\end{equation}
	\eeta
	
	The second starting point is the Nash--Moser iteration estimate of Huang--Shen \cite{HuangShen2025} for the
	porous medium equation.  We reproduce the form relevant to our localization.
	
	\bta[Huang--Shen {\cite{HuangShen2025}}]
	\label{thm:Huang-Shen-intro}
	Let $(M^n,g)$ be complete with $\Ric\ge-Kg$, $K\ge0$, and let $u>0$
	be a weak solution of
	\[
	u_t=\Delta u^m,\qquad m>1,
	\]
	on $B_{2R}(x_0)\times(s-R^2,s+\varepsilon)$.  Set
	\[
	v=\frac{m}{m-1}u^{m-1},
	\qquad a=\frac{n(m-1)}{n(m-1)+2},
	\qquad I_\tau=(s-\tau R^2,s].
	\]
	For every $\alpha>1$ and $0<\tau<1$, the barrier
	$\varphi_{0,\varsigma}$, with the auxiliary parameter $\varsigma>0$ selected as
	in \cite{HuangShen2025}, satisfies
	\begin{equation}\label{eq:Huang-Shen-PME-intro}
		\sup_{B_R(x_0)\times I_\tau}
		\left(
		\frac{|\nabla v|^2}{v}-\alpha\frac{v_t}{v}-\varphi_{0,\varsigma}
		\right)
		\le
		\frac{a\alpha^2}{1+a(\alpha-1)}
		\frac{C}{(1-\tau)^{3+n}}
		\left(\frac{1+\sqrt K\,R}{R^2}\right),
	\end{equation}
	where $C$ depends only on $m,n$ and the local quantities
	$v_{\max}^{R,s}$ and $v_{\min}^{R,s}$.
	\eeta
	
	Huang--Shen \cite{HuangShen2025} also treat $1-2/n<m<1$.  In that branch one estimates the
	negative of the pressure Harnack quantity, and the admissibility condition on
	the barrier $\varphi$ is changed to match the fast-diffusion sign.  The
	inner-cylinder estimate and the subsequent iteration have the same form, so we
	do not repeat their FDE theorem here.
	
	The maximum principle estimates the Bochner quantity at a space--time maximum 
	and often yields sharper constants, as in Xu's Hamilton-type estimate 
	\cite{Xu2012}. Nash--Moser iteration instead combines a positive-part energy 
	inequality with a local Sobolev inequality to obtain an $L^\infty$ bound. This 
	method is well suited to local estimates and has been used for $p$-harmonic 
	functions and other quasilinear elliptic equations on manifolds
	\cite{HeWangWei2024,HeHuWang2026,HeSunWang2024,WangZhang2011}. We adopt this 
	approach here. The solution $u$ is assumed smooth; the distributional 
	formulation is used only in the truncation argument.
	
	Yau's theorem allows the broader condition $uF''/F'\ge-2/n$, but assumes 
	compactness and nonnegative Ricci curvature. Xu obtained local Hamilton-type 
	estimates for general filtration laws, while Huang--Shen treated the exact PME 
	and FDE powers by iteration. In this paper, we prove a local Li--Yau-type 
	estimate for general filtration equations, including non-power laws, on 
	complete manifolds with $\Ric\ge-Kg$, assuming that the pressure diffusivity is 
	positive, nondecreasing, and concave. Thus the iterative method is extended 
	beyond the power-law setting; no improvement of Yau's constitutive condition 
	or of the constants in the power cases is claimed.
	
	For related gradient, Harnack, and entropy estimates for $p$-Laplace and
	doubly nonlinear diffusion equations on Riemannian manifolds, we refer to
	\cite{WangChen2014,WangXue2021,WangXue2023}.  The $p$-filtration equation
	has also been considered in the forcoming manuscript
	\cite{WangHaoUnpublished}.  Here we restrict our attention to the scalar
	filtration equation \eqref{eq:general-equation-intro}.
	
	We now state the main results.  They are interior estimates in a uniformly
	parabolic range: the solution is confined to a compact positive interval, or,
	equivalently for the argument, $F'(u)$ has uniform positive upper and lower
	bounds on the cylinder under consideration.
	
	We first introduce the pressure variable without assigning a separate symbol to the
	primitive.  Fix $u_*\in(0,\infty)$ and an arbitrary constant $v_*\in\mathbb R$, and set
	\begin{equation}\label{eq:pressure-intro}
		v(x,t)=v_*+\int_{u_*}^{u(x,t)}\frac{F'(r)}r\,\dd r.
	\end{equation}
	Since $F'>0$, the integral is strictly increasing as a function of $u$; hence $u$
	may be regarded as a function of $v$.  Define
	\begin{equation}\label{eq:h-definition}
		h(v)=F'(u(v)).
	\end{equation}
	The derivatives of the diffusion coefficient are encoded by
	\begin{equation}\label{eq:theta-rho-intro}
		\theta(u):=\frac{uF''(u)}{F'(u)}=h'(v),
		\qquad
		\rho(u):=u\theta'(u)=h(v)h''(v).
	\end{equation}
	Fix $x_0\in M$, $R>0$, $s\in\mathbb R$, and $\varepsilon>0$, and put
	\begin{equation}\label{eq:cylinders}
		\Q_{2R}:=B_{2R}(x_0)\times(s-R^2,s+\varepsilon),
		\qquad
		I_\tau:=(s-\tau R^2,s],\quad0<\tau<1.
	\end{equation}
	
	\begin{theorem}[Local differential harnack estimate ]\label{thm:barrier-main}
		Let $(M^n,g)$ be complete, $n>2$, with $\Ric\ge-Kg$ for some $K\ge0$,
		and let $u>0$ be a smooth solution of \eqref{eq:general-equation-intro} on
		$\Q_{2R}$, where $F\in C^3((0,\infty))$ and $F'>0$.  Assume that there
		exist constants $0<h_-\le h_+<\infty$ and $\Theta<\infty$ such that,
		throughout $\Q_{2R}$,
		\begin{equation}\label{eq:uniform-parabolic-structure}
			h_-\le h(v)=F'(u)\le h_+,
		\end{equation}
		and
		\begin{equation}\label{eq:pressure-concavity-main}
			0\le h'(v)\le\Theta,
			\qquad h''(v)\le0.
		\end{equation}
		Equivalently, by \eqref{eq:theta-rho-intro},
		\begin{equation}\label{eq:theta-monotone}
			0\le\theta(u)\le\Theta,
			\qquad \theta'(u)\le0.
		\end{equation}
		Fix $\alpha>1$.  Let $\phi\ge0$ be a $C^1$ function on
		$(s-R^2,s+\varepsilon)$ satisfying
		\begin{equation}\label{eq:barrier-Riccati-main}
			\phi'(t)+\frac{2}{n\alpha^2}\phi(t)^2
			-\frac{n\alpha^2}{8(\alpha-1)^2}
			\left(2Kh_+-\frac{4(\alpha-1)}{n\alpha^2}\phi(t)\right)_+^2
			\ge0.
		\end{equation}
		Then, for every $0<\tau<1$,
		\begin{equation}\label{eq:barrier-main-estimate}
			\sup_{B_R(x_0)\times I_\tau}
			\left(
			\frac{|\nabla v|^2}{h(v)}
			-\alpha\frac{v_t}{h(v)}-\phi(t)
			\right)
			\le
			\frac{C(n,\alpha,h_-,h_+,\Theta)}
			{(1-\tau)^{3+n/2}}
			\frac{1+\sqrt K R}{R^2}.
		\end{equation}
	\end{theorem}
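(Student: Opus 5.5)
We first rewrite the equation in pressure form. Since $v_t=\frac{F'(u)}{u}u_t$ and $\nabla v=\frac{F'(u)}{u}\nabla u$, we have $\nabla F(u)=u\nabla v$. Hence
\[
v_t=\frac{h(v)}{u}\,\mathrm{div}(u\nabla v)=h(v)\Delta v+h(v)\frac{\nabla u\cdot\nabla v}{u}=h\Delta v+|\nabla v|^2 ,
\]
using $\nabla u/u=\nabla v/h$. Set
\[
w=\frac{|\nabla v|^2}{h},\qquad z=\frac{v_t}{h},\qquad \mathcal H=w-\alpha z-\phi(t),
\]
so that $z=\Delta v+w$ and $\mathcal H=-\alpha\Delta v-(\alpha-1)w-\phi$. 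The linearized operator is $\mathcal L=\partial_t-h\Delta-2\nabla v\cdot\nabla$, with lower-order corrections coming from $h'$.

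We compute $\mathcal L w$ and $\mathcal L z$. For $w$ we use the Bochner formula together with $\Ric\ge-Kg$. For $z$ we differentiate the equation in $t$. Everything is expressed in terms of $h'$ and $hh''$, and the terms containing $h''$ carry a sign that is favorable because $h''\le0$.

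The key step is a Bochner--discriminant argument. We use $|\nabla^2v|^2\ge(\Delta v)^2/n$ and write $\Delta v=-(\mathcal H+(\alpha-1)w+\phi)/\alpha$. Completing the square in $w$ (the discriminant step) then yields, on $\{\mathcal H>0\}$, an inequality of the form
\[
\mathcal L\mathcal H\le-\frac{2h}{n\alpha^2}\mathcal H^2-\frac{2(\alpha-1)}{n\alpha^2}\mathcal H\,w+(\text{cross terms})+\Bigl[-\phi'-\frac{2}{n\alpha^2}\phi^2+\frac{n\alpha^2}{8(\alpha-1)^2}\bigl(2Kh_+-\tfrac{4(\alpha-1)}{n\alpha^2}\phi\bigr)_+^2\Bigr]h^{\pm}.
\]
Here the mixed terms $\phi w$ and $Kh\,w$ are absorbed into the $w^2$ coercivity. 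The bracket is $\le0$ precisely by the Riccati condition \eqref{eq:barrier-Riccati-main}.

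The first-order $h'$ terms are of the type $h'\nabla v\cdot\nabla w$. We handle them with $0\le h'\le\Theta$ and Young's inequality against the good term $|\nabla^2 v|^2$; alternatively they are retained as drift terms in divergence form. In this way we obtain a coercive positive-part differential inequality
\[
\partial_t\mathcal H_+-\mathrm{div}(h\nabla\mathcal H_+)\le b\cdot\nabla\mathcal H_+-c_0\mathcal H_+^2+C(1+K)\mathcal H_+,
\]
with $|b|\lesssim|\nabla v|\lesssim\sqrt{w}$. This is the step I expect to be the main obstacle: the drift $2\nabla v\cdot\nabla\mathcal H$ must be controlled using $w\le\mathcal H+\alpha z+\phi$ together with coercivity, while keeping all constants dependent only on $n,\alpha,h_\pm,\Theta$.

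Next we run Nash--Moser iteration. We multiply the inequality by $\eta^2\mathcal H_+^{p-1}$, where $\eta$ is a space--time cutoff supported in $B_{2R}\times(s-R^2,s]$, with $|\nabla\eta|\lesssim1/((1-\tau)R)$, $|\partial_t\eta|\lesssim1/((1-\tau)R^2)$ and $|\Delta\eta|$ controlled through Laplacian comparison, which is responsible for the factor $\sqrt K R$. The drift term $\int\eta^2\mathcal H_+^{p-1}|\nabla v||\nabla\mathcal H_+|$ is absorbed by the gradient term and the coercive $\mathcal H_+^{p+1}$ term; this is where the coercivity is used to handle $|\nabla v|^2\lesssim\mathcal H_++\dots$.

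We then apply Saloff-Coste's local Sobolev inequality on $B_{2R}$ under $\Ric\ge-K$, whose constant is $e^{C\sqrt KR}|B|^{-2/n}R^2$ with $n>2$. Iterating over $p_k=p_0\chi^k$, $\chi=n/(n-2)$, gives $\sup\mathcal H_+\le C(1-\tau)^{-\gamma}\bigl(\frac{1}{|Q|}\int\mathcal H_+^{p_0}\bigr)^{1/p_0}$.

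Finally, the base $L^{p_0}$ norm is bounded directly from the coercive term: integrating the inequality against $\eta^2\mathcal H_+^{p_0-1}$ with the $-c_0\mathcal H_+^{p_0+1}$ term gives $\|\mathcal H_+\|_{L^{p_0}}$-average $\lesssim(1+\sqrt KR)/((1-\tau)R^2)$. This also removes the volume factors, and collecting the exponents gives $(1-\tau)^{-(3+n/2)}$, which yields \eqref{eq:barrier-main-estimate}.
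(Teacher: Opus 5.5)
Your overall plan matches the paper's: pressure form, a Bochner--discriminant step closed by \eqref{eq:barrier-Riccati-main}, a positive-part inequality, then Nash--Moser. But the step you flag as the main obstacle is resolved incorrectly, and as written the argument does not close.

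\textbf{The drift term.} In passing to your positive-part inequality you discard the term $-\frac{2(\alpha-1)}{n\alpha^2}\mathcal H w$ from your Bochner inequality. You then try to absorb the drift $\int\eta^2\mathcal H_+^{p-1}|\nabla v|\,|\nabla\mathcal H_+|$ using ``$|\nabla v|^2\lesssim\mathcal H_++\dots$''. No such bound exists. The relation $w=\mathcal H+\alpha z+\phi$ is an identity, and $z=v_t/h$ is neither signed nor a priori bounded. So after Young's inequality you are left with $\int\eta^2 w\,\mathcal H_+^{p}$, which neither the gradient term nor $\int\eta^2\mathcal H_+^{p+1}$ controls.

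The missing idea is to keep that product term. Write $f=\mathcal H_+$ and let your $w$ be the paper's $y$. The Bochner computation then gives
\[
\cL f\le2(1+h')\langle\nabla v,\nabla f\rangle-c_0f^2-d_0wf,\qquad c_0=\tfrac{2}{n\alpha^2},\quad d_0=\tfrac{4(\alpha-1)}{n\alpha^2}.
\]
Test with $\lambda\eta^2f^b$, where $b$ is an exponent. After integrating by parts the drift has coefficient $2+h'$, and since $|\nabla v|^2=hw$ it is bounded by $\frac b4h\lambda\eta^2f^{b-1}|\nabla f|^2+\frac{\Lambda^2}{b}\lambda\eta^2wf^{b+1}$, with $\Lambda=\sup|2+h'|\le2+\Theta$. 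The last term is absorbed by $\frac{d_0}{2}\lambda\eta^2wf^{b+1}$ once $b\ge2\Lambda^2/d_0$. So the iteration must start at an exponent depending on $n,\alpha,\Theta$, and the $-d_0wf$ term must be carried at every step.

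\textbf{The linear term.} Your term $C(1+K)\mathcal H_+$ should not appear. Since $h'\ge0$ and $hh''\le0$, all zeroth-order $h'$ and $hh''$ contributions combine into $-\frac{(\alpha-1)h'}{\alpha}w^2-\frac{h'}{\alpha}(f+\phi)^2+hh''w(f+\phi)\le0$. The first-order $h'$ terms are exactly the drift, and every $K$-term is absorbed by \eqref{eq:barrier-Riccati-main}. A differential inequality containing $+C\mathcal H_+$ admits the constant $C/c_0$ as a solution. It therefore cannot by itself give the $R^{-2}$ decay of \eqref{eq:barrier-main-estimate} when $K=0$. Likewise, spending part of $|\nabla^2v|^2$ on the $h'$ terms (your first option) creates $\Theta^2w^2$ terms that the $w^2$-coercivity cannot absorb in general.

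\textbf{The $\sqrt KR$ dependence.} This is a second, lesser gap. The energy method uses only one integration by parts, so no $\Delta\eta$ and no Laplacian comparison enter. $K$ appears only through the Sobolev constant $\mathcal S_{R,N}\sim e^{C_S(1+\sqrt KR)}R^2\vol(B_{2R})^{-2/n}$. With a fixed starting exponent $p_0$, the iteration leaves a factor $\exp\bigl(C(1+\sqrt KR)/p_0\bigr)$, which is exponential rather than linear in $\sqrt KR$.

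The paper instead takes $b_0=\beta_0(1+\sqrt KR)$, which keeps $\mathcal S_{R,N}^{1/b_1}$ and $\mathcal S_{R,N}^{n/(2b_1)}$ uniformly bounded. The linear factor then comes from the initial $L^{b_1}$ bound. That bound is derived from the $-c_0f^{b+2}$ term, using powered cutoffs $\widetilde\lambda^{p+1},\widetilde\eta^{p+1}$ and the inequality $L_0x^p-\kappa_3b_0x^{p+1}\le L_0^{p+1}(\kappa_3b_0)^{-p}$. This produces the factor $(p+1)^{2+2/p}/b_0\lesssim1+\sqrt KR$.

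\textbf{The iteration exponent.} The space--time iteration should use $\chi=1+2/n$ from \eqref{eq:parabolic-Sobolev}, which interpolates $\sup_tL^2$ and $L^2_tH^1$, not the elliptic exponent $n/(n-2)$. The power $(1-\tau)^{-3-n/2}$ is then $(1-\tau')^{-3}$ from the initial step times $\delta^{-(n+2)/b_1}$, where $(n+2)/b_1=n/(b_0+1)\le n/2$.
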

	
	The constant function
	\begin{equation}\label{eq:explicit-barrier}
		\phi_K:=\frac{n\alpha^2}{4(\alpha-1)}Kh_+
	\end{equation}
	satisfies \eqref{eq:barrier-Riccati-main} and will be used below.
	
	\begin{remark}\label{rem:interpretation}
		\leavevmode
		\begin{itemize}
			\item For $F(u)=u^m$ with $m>1$, one has
			$\theta=m-1$ and $\theta'=0$. Hence the structural assumptions of
			Theorem~\ref{thm:barrier-main} are automatic on every compact positive
			range of $u$.
			
			\item When $1-2/n<m<1$, the pressure is negative, and the
			fast-diffusion estimate is obtained from the evolution of the negative
			Harnack quantity $-H_\alpha$, exactly as in Huang--Shen. This power-law
			calculation is recorded in Section~\ref{sec:special-laws}.
			
			\item The structural assumptions include genuinely non-power laws, such as
			\[
			F'(u)=c u^\beta(1+u^\gamma)^{-\delta/\gamma},
			\qquad
			\beta,\gamma>0,\quad 0<\delta\le\beta.
			\]
			In particular, taking $c=\beta=\delta=\gamma=1$ gives
			\[
			F'(u)=\frac{u}{1+u},
			\qquad
			F(u)=u-\log(1+u)+C.
			\]
			This law models a diffusivity that increases at low density and saturates
			at high density. In pressure coordinates,
			\[
			v=\log(1+u),\qquad h(v)=1-e^{-v},
			\]
			so $h$ is increasing and concave.  In particular, $h''(v)\le0$ gives
			the favorable sign of the $h(v)h''(v)$ terms in the evolution inequality
			for the Harnack quantity.
		\end{itemize}
	\end{remark}
	
	\begin{corollary}[Global differential harnack estimate ]
		\label{cor:global}
		Let $(M^n,g)$ be complete with $\Ric\ge-Kg$ for some $K\ge0$, and let
		$u$ be a smooth positive ancient solution of \eqref{eq:general-equation-intro}
		on $M\times(-\infty,T)$.  Suppose that there are constants
		$0<h_-\le h_+<\infty$ and $\Theta<\infty$ such that
		$h_-\le h(v)\le h_+$ and \eqref{eq:pressure-concavity-main} hold on all of
		space--time.  Then, for every $\alpha>1$,
		\begin{equation}\label{eq:global-Harnack-estimate}
			\frac{|\nabla v|^2}{h(v)}-\alpha\frac{v_t}{h(v)}
			\le\frac{n\alpha^2}{4(\alpha-1)}Kh_+
			\qquad\text{on }M\times(-\infty,T).
		\end{equation}
	\end{corollary}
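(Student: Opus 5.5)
We will deduce Corollary~\ref{cor:global} from Theorem~\ref{thm:barrier-main} by applying the local estimate on larger and larger cylinders and letting the radius tend to infinity. The barrier will be the constant function $\phi_K$ from \eqref{eq:explicit-barrier}.

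First, we check that $\phi\equiv\phi_K$ satisfies \eqref{eq:barrier-Riccati-main}. Put $A=2Kh_+$ and $B=\frac{4(\alpha-1)}{n\alpha^2}$. Since $\phi_K=\frac{n\alpha^2}{4(\alpha-1)}Kh_+=\frac{A}{2B}$, we get $A-B\phi_K=A/2\ge0$. The left side of \eqref{eq:barrier-Riccati-main} then becomes
\[
\frac{2}{n\alpha^2}\phi_K^2-\frac{n\alpha^2}{8(\alpha-1)^2}\frac{A^2}{4}.
\]
Substituting $\phi_K^2=\frac{n^2\alpha^4}{16(\alpha-1)^2}K^2h_+^2$, the first term equals $\frac{n\alpha^2}{8(\alpha-1)^2}K^2h_+^2$. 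The second term also equals $\frac{n\alpha^2}{8(\alpha-1)^2}K^2h_+^2$, since $A^2/4=K^2h_+^2$. So the expression is exactly $0\ge0$, and $\phi_K$ is admissible.

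Next, we apply the theorem. Fix $(x_0,s)\in M\times(-\infty,T)$ and choose $\varepsilon>0$ with $s+\varepsilon<T$. For every $R>0$ the cylinder $\Q_{2R}=B_{2R}(x_0)\times(s-R^2,s+\varepsilon)$ lies inside $M\times(-\infty,T)$, because the solution is ancient. The hypotheses \eqref{eq:uniform-parabolic-structure} and \eqref{eq:pressure-concavity-main} hold there, since they hold globally. Taking $\tau=1/2$, the point $(x_0,s)$ lies in $B_R(x_0)\times I_{1/2}$, and Theorem~\ref{thm:barrier-main} gives
\[
\frac{|\nabla v|^2}{h(v)}-\alpha\frac{v_t}{h(v)}-\phi_K\le C(n,\alpha,h_-,h_+,\Theta)\,2^{3+n/2}\,\frac{1+\sqrt K R}{R^2}
\quad\text{at }(x_0,s).
\]
The constant $C$ does not depend on $R$ or on the base point, so we may let $R\to\infty$. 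The right side behaves like $C'(R^{-2}+\sqrt K R^{-1})$ and tends to $0$. This yields \eqref{eq:global-Harnack-estimate} at $(x_0,s)$, and $(x_0,s)$ was arbitrary.

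The main point needing care is that the constant in \eqref{eq:barrier-main-estimate} depends only on $n,\alpha,h_\pm,\Theta$, and not on local sup or inf quantities of the solution. Otherwise the limit $R\to\infty$ would fail. In the theorem these local quantities are replaced by the uniform bounds $h_\pm$, which hold globally here, so the argument goes through. A second detail is the dimension restriction $n>2$ in Theorem~\ref{thm:barrier-main}. For $n\le2$ we would pass to $\widetilde M=M\times\mathbb T^k$ with a flat torus, extending $u$ constantly in the new variables. This keeps $\Ric\ge-Kg$ and all structural bounds, but it changes $n$ in $\phi_K$ to a larger value. So we either accept the theorem's range $n>2$ as implicit in the corollary, or accept a slightly weaker constant in low dimensions.
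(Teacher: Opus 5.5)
Your proposal is correct and is essentially the paper's argument: you check that $\phi_K$ gives equality in the Riccati condition, apply Theorem~\ref{thm:barrier-main} with $\tau=1/2$ on $B_{2R}(x_0)\times(s-R^2,s+\varepsilon)$, and let $R\to\infty$, using that the constant does not depend on $R$. Your extra remark on $n\le 2$ is also sound. Taking the product with a flat torus replaces $n$ by $n+k$ in $\phi_K$, so the constant is weaker only when $K>0$. This addresses the $n>2$ restriction that the paper's proof leaves implicit.
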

	
	\begin{corollary}[ Harnack inequality]\label{cor:Harnack-additive}
		Under the assumptions of Theorem~\ref{thm:barrier-main}, take
		$\phi=\phi_K$ and set
		\[
		E_{R,\tau}:=\frac{C(n,\alpha,h_-,h_+,\Theta)}
		{(1-\tau)^{3+n/2}}
		\frac{1+\sqrt K R}{R^2}.
		\]
		If $t_1<t_2$ belong to $I_\tau$ and a minimizing geodesic from $x_1$
		to $x_2$ is contained in $B_R(x_0)$, then
		\begin{equation}\label{eq:additive-Harnack}
			v(x_1,t_1)\le v(x_2,t_2)
			+\frac{\alpha d(x_1,x_2)^2}{4(t_2-t_1)}
			+\frac{h_+}{\alpha}
			(\phi_K+E_{R,\tau})(t_2-t_1).
		\end{equation}
	\end{corollary}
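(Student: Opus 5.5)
The plan is to integrate the local estimate of Theorem~\ref{thm:barrier-main}, with $\phi=\phi_K$, along a space--time path from $(x_1,t_1)$ to $(x_2,t_2)$, in the style of Li--Yau.

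Let $\gamma:[0,1]\to M$ be a minimizing geodesic with $\gamma(0)=x_2$, $\gamma(1)=x_1$, of length $d=d(x_1,x_2)$. By hypothesis $\gamma\subset B_R(x_0)$. Consider the curve $\eta(\sigma)=(\gamma(\sigma),\,t_2-\sigma(t_2-t_1))$, $\sigma\in[0,1]$. It runs from $(x_2,t_2)$ to $(x_1,t_1)$. Since $t_1,t_2\in I_\tau$ and $I_\tau$ is an interval, all intermediate times also lie in $I_\tau$. Hence the whole curve stays in $B_R(x_0)\times I_\tau$, where \eqref{eq:barrier-main-estimate} holds. Because $\phi_K$ is constant and satisfies \eqref{eq:barrier-Riccati-main}, multiplying \eqref{eq:barrier-main-estimate} by $h(v)/\alpha>0$ gives, along the curve,
\[
v_t\ge \frac{|\nabla v|^2}{\alpha}-\frac{h(v)}{\alpha}\bigl(\phi_K+E_{R,\tau}\bigr)
\ge \frac{|\nabla v|^2}{\alpha}-\frac{h_+}{\alpha}\bigl(\phi_K+E_{R,\tau}\bigr).
\]
The second inequality uses $h\le h_+$ and $\phi_K+E_{R,\tau}\ge0$. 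If the bracket in \eqref{eq:barrier-main-estimate} happened to be negative, the same inequality still holds, since we only use an upper bound.

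Next, differentiate $v$ along $\eta$. With $T=t_2-t_1$,
\[
\frac{d}{d\sigma}v(\eta(\sigma))=\langle\nabla v,\dot\gamma\rangle - T\,v_t
\le |\nabla v|\,d-\frac{T}{\alpha}|\nabla v|^2+\frac{h_+T}{\alpha}\bigl(\phi_K+E_{R,\tau}\bigr).
\]
Maximizing the quadratic $d\,y-\frac{T}{\alpha}y^2$ over $y=|\nabla v|\ge0$ gives $\frac{\alpha d^2}{4T}$. Integrating over $\sigma\in[0,1]$ then yields
\[
v(x_1,t_1)-v(x_2,t_2)\le \frac{\alpha d^2}{4T}+\frac{h_+}{\alpha}\bigl(\phi_K+E_{R,\tau}\bigr)T,
\]
which is \eqref{eq:additive-Harnack}.

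There is no real obstacle here; the only care needed is bookkeeping. The path must remain in the region where the estimate is valid, which is guaranteed by the geodesic hypothesis and by the convexity of $I_\tau$. The sign of $h$ must be used correctly when multiplying through. And $v$ is smooth on the cylinder, so differentiating along $\eta$ is legitimate.
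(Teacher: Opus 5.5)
Your proposal is correct and follows essentially the same route as the paper. Both multiply the local estimate with $\phi=\phi_K$ by $h/\alpha$, use $h\le h_+$, integrate along a constant-speed minimizing geodesic in space--time, and absorb $\langle\nabla v,\dot\gamma\rangle$ by a quadratic (Young) bound; the only difference is that you parametrize the path backward from $(x_2,t_2)$ over $\sigma\in[0,1]$, while the paper parametrizes it forward by $t\in[t_1,t_2]$.
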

	
	\begin{corollary}[Liouville-type theorem]\label{cor:Liouville}
		Assume the hypotheses of Corollary~\ref{cor:global}, with $K=0$ and
		$M$ connected.  If in addition $u_t\le0$ on $M\times(-\infty,T)$,
		then $u$ is constant on space--time.
	\end{corollary}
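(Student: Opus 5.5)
The plan is to apply Corollary~\ref{cor:global} with $K=0$ and combine it with the sign condition $u_t\le0$, which forces both terms of the Harnack quantity to vanish.

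First, with $K=0$ the barrier $\phi_K=\frac{n\alpha^2}{4(\alpha-1)}Kh_+$ is zero. Corollary~\ref{cor:global}, whose hypotheses are assumed, then gives for any fixed $\alpha>1$ (say $\alpha=2$)
\[
\frac{|\nabla v|^2}{h(v)}-\alpha\frac{v_t}{h(v)}\le0
\qquad\text{on }M\times(-\infty,T).
\]
Since $h(v)\ge h_->0$, we may multiply by $h(v)$ and obtain $|\nabla v|^2\le\alpha v_t$.

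Second, we translate the sign of $u_t$ into a sign of $v_t$. From \eqref{eq:pressure-intro} and the chain rule,
\[
v_t=\frac{F'(u)}{u}\,u_t .
\]
Because $F'(u)>0$ and $u>0$, the hypothesis $u_t\le0$ gives $v_t\le0$. We therefore have the chain
\[
0\le|\nabla v|^2\le\alpha v_t\le0 .
\]
All of these quantities must vanish, so $\nabla v\equiv0$ and $v_t\equiv0$ on $M\times(-\infty,T)$.

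Third, $v$ is smooth and its full space--time differential vanishes. Since $M$ is connected, $M\times(-\infty,T)$ is connected, so $v$ is constant there. The map $u\mapsto v$ in \eqref{eq:pressure-intro} is strictly increasing, hence injective, so $u$ is constant as well.

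None of these steps is a genuine obstacle once Corollary~\ref{cor:global} is available; all the analytic work is inside that corollary. The only points needing care are that $h$ has a positive lower bound, so dividing and multiplying by $h(v)$ is legitimate, and that $v$ is an injective function of $u$, so constancy of $v$ transfers back to $u$.
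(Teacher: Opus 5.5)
Your proposal is correct and follows essentially the same route as the paper. The paper applies Corollary~\ref{cor:global} with $K=0$ to get $H_\alpha\le0$, and uses $u_t\le0$ to get $z=v_t/h\le0$. It then concludes from $0\ge H_\alpha=y-\alpha z\ge y\ge0$ that $\nabla v=0$ and $v_t=0$. Your version multiplies through by $h$ instead of working with $y$ and $z$, which is the same argument.
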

	
	The rest of the paper is organized as follows.  Section~\ref{sec:prelim}
	records the pressure transformation, the local Sobolev inequality, and the
	evolution identities.  Section~\ref{sec:proof} proves the main theorem and its
	Harnack and Liouville consequences.  Section~\ref{sec:special-laws} treats the
	PME and FDE power laws and gives non-power examples.
	
	\section{Preliminaries}\label{sec:prelim}
	
	We begin with the elementary change of variables underlying the argument.
	\begin{proposition}\label{prop:explicit-derivatives}
		Let $v$ and $h$ be defined by \eqref{eq:pressure-intro} and
		\eqref{eq:h-definition}.  Then
		\begin{equation}\label{eq:pressure-derivatives}
			\frac{\dd v}{\dd u}=\frac{F'(u)}u,
			\qquad
			\nabla v=\frac{F'(u)}u\nabla u,
			\qquad v_t=\frac{F'(u)}u u_t,
			\qquad \frac{\dd u}{\dd v}=\frac{u}{F'(u)}.
		\end{equation}
		Moreover, $v$ satisfies
		\begin{equation}\label{eq:pressure-equation-intro}
			v_t=h(v)\Delta v+|\nabla v|^2,
		\end{equation}
		and, with $\theta(u)=uF''(u)/F'(u)$,
		\begin{align}
			h'(v)&=\theta(u)=\frac{uF''(u)}{F'(u)},\label{eq:hv-F}\\
			h\,h''(v)&=u\theta'(u)\label{eq:h-hvv-theta}\\
			&=\frac{uF''(u)}{F'(u)}
			+\frac{u^2F'''(u)}{F'(u)}
			-\frac{u^2F''(u)^2}{F'(u)^2}.\label{eq:h-hvv-F}
		\end{align}
	\end{proposition}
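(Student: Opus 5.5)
The plan is to use the chain rule throughout, together with the fact that $v\mapsto u(v)$ is the inverse of the strictly increasing map $u\mapsto v$ defined in \eqref{eq:pressure-intro}.

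\emph{First-order relations.} By the fundamental theorem of calculus applied to \eqref{eq:pressure-intro}, $\dd v/\dd u=F'(u)/u$. The chain rule then gives $\nabla v=\frac{F'(u)}u\nabla u$ and $v_t=\frac{F'(u)}u u_t$. Since $F'>0$ and $u>0$, this derivative is positive, so the inverse function theorem yields $\dd u/\dd v=u/F'(u)$.

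\emph{Pressure equation.} Write $\Delta F(u)=\operatorname{div}(F'(u)\nabla u)$ and use $F'(u)\nabla u=u\nabla v$. Then
\[
\Delta F(u)=\operatorname{div}(u\nabla v)=u\Delta v+\nabla u\cdot\nabla v .
\]
Multiply by $F'(u)/u$ and use $v_t=\frac{F'(u)}u u_t$ together with $\frac{F'(u)}u\nabla u=\nabla v$:
\[
v_t=F'(u)\Delta v+|\nabla v|^2 .
\]
Since $h(v)=F'(u)$, this is \eqref{eq:pressure-equation-intro}.

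\emph{Derivatives of $h$.} Differentiate $h(v)=F'(u(v))$:
\[
h'(v)=F''(u)\frac{\dd u}{\dd v}=\frac{uF''(u)}{F'(u)}=\theta(u),
\]
which is \eqref{eq:hv-F}. Differentiating once more gives
\[
h''(v)=\theta'(u)\frac{\dd u}{\dd v}=\frac{u\theta'(u)}{F'(u)},
\]
so $h\,h''=u\theta'(u)$, which is \eqref{eq:h-hvv-theta}. Finally, expand $\theta'(u)$ by the quotient rule:
\[
\theta'(u)=\frac{F''}{F'}+\frac{uF'''}{F'}-\frac{uF''^2}{F'^2}.
\]
Multiplying by $u$ gives \eqref{eq:h-hvv-F}.

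\emph{Regularity and the main obstacle.} There is no real obstacle. The only care needed is regularity. Since $F\in C^3$, $\theta$ is $C^1$, so $h$ is $C^2$. Also, $u$ is smooth and positive, so every composition above is legitimate.
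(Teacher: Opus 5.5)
Your proof is correct and follows essentially the same chain-rule argument as the paper, including differentiating $h'(v)=\theta(u)$ once more in $v$ to get $h\,h''=u\theta'(u)$. The only difference is cosmetic. You derive the pressure equation from the divergence form $\Delta F(u)=\operatorname{div}(u\nabla v)$, whereas the paper expands $\Delta F(u)=F'(u)\Delta u+F''(u)|\nabla u|^2$ and substitutes; your route is slightly cleaner.
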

	
	\begin{proof}
		The fundamental theorem of calculus gives $\dd v/\dd u=F'(u)/u$;
		the remaining identities in \eqref{eq:pressure-derivatives} follow by the
		chain rule.  Since
		\[
		\Delta F(u)=F'(u)\Delta u+F''(u)|\nabla u|^2,
		\]
		substitution of \eqref{eq:pressure-derivatives} into
		$v_t=(F'(u)/u)\Delta F(u)$ gives
		$v_t=F'(u)\Delta v+|\nabla v|^2$, which is
		\eqref{eq:pressure-equation-intro}.
		
		The first identity follows immediately from \eqref{eq:pressure-derivatives}:
		\[
		h'(v)=F''(u)\frac{\dd u}{\dd v}=\frac{uF''(u)}{F'(u)}=\theta(u).
		\]
		Differentiating $h'(v)=\theta(u)$ with respect to $v$ gives
		\[
		h''(v)=\theta'(u)\frac{u}{F'(u)}=\theta'(u)\frac{u}{h},
		\]
		which proves \eqref{eq:h-hvv-theta}; expanding $u\theta'(u)$ gives \eqref{eq:h-hvv-F}.
	\end{proof}
	
	Thus $h h''\le0$ is exactly the monotonicity condition $\theta'\le0$.
	
	The next lemma is the local Sobolev input for the iteration.
	\begin{lemma}\label{lem:local-Sobolev}
		Suppose that $n\ge2$ and $\Ric\ge-Kg$. If $n>2$, set $N=n$; 
		if $n=2$, let $N>2$ be any fixed number. Then there exists a 
		constant $C_S=C_S(n,N)>0$ such that, for every 
		$\zeta\in C_0^\infty(B_{2R}(x_0))$,
		\begin{equation}\label{eq:local-Sobolev}
			\left(
			\int_{B_{2R}}|\zeta|^{2N/(N-2)}\dd\mu
			\right)^{(N-2)/N}
			\leq
			\mathcal S_{R,N}
			\int_{B_{2R}}
			\bigl(|\nabla\zeta|^2+R^{-2}\zeta^2\bigr)\dd\mu,
		\end{equation}
		where
		\begin{equation}\label{eq:S_R}
			\mathcal S_{R,N}
			=C_S\exp\!\bigl(C_S(1+\sqrt K R)\bigr)
			R^2\vol(B_{2R})^{-2/N}.
		\end{equation}
		
		Let
		\[
		\chi=1+\frac2N.
		\]
		Then, for every time interval $J$ and every smooth function $\zeta$ 
		with compact spatial support in $B_{2R}(x_0)$,
		\begin{align}
			\left(
			\int_J\int_{B_{2R}}|\zeta|^{2\chi}\dd\mu\dd t
			\right)^{1/\chi}
			\leq{}&
			\mathcal S_{R,N}^{1/\chi}
			\left(
			\sup_{t\in J}
			\int_{B_{2R}}\zeta^2\dd\mu
			\right)^{2/(N\chi)}
			\notag\\
			&\times
			\left(
			\int_J\int_{B_{2R}}
			\bigl(|\nabla\zeta|^2+R^{-2}\zeta^2\bigr)
			\dd\mu\dd t
			\right)^{1/\chi}.
			\label{eq:parabolic-Sobolev}
		\end{align}
	\end{lemma}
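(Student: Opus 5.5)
The first inequality \eqref{eq:local-Sobolev} is a standard result; I would quote it rather than reprove it. The plan is to invoke Saloff-Coste's local Sobolev inequality on manifolds with $\Ric\ge-Kg$. That inequality is derived from the Bishop--Gromov volume comparison together with the local Neumann Poincaré inequality of Buser. In the form given by Saloff-Coste (see also the version used by Huang--Shen and by Wang--Zhang), it reads: for $n>2$ and $\zeta\in C_0^\infty(B)$, with $B$ a ball of radius $r$,
\[
\Bigl(\int_B|\zeta|^{2n/(n-2)}\Bigr)^{(n-2)/n}\le e^{C(1+\sqrt K r)}\,r^2\vol(B)^{-2/n}\int_B\bigl(|\nabla\zeta|^2+r^{-2}\zeta^2\bigr).
\]
Applying this with $r=2R$ and absorbing the resulting factors $4$ and $2$ into $C_S$ gives \eqref{eq:S_R} with $N=n$.

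For $n=2$, the curvature bound $\Ric\ge-Kg$ implies the same volume doubling and Poincaré estimates with dimension parameter $N>2$. The reason is that the Bishop--Gromov ratio bound $\vol(B_r)/\vol(B_s)\le (r/s)^2e^{C\sqrt K r}\le (r/s)^N e^{C\sqrt K r}$ holds for any $N\ge2$. Saloff-Coste's argument, which is purely metric-measure (doubling plus Poincaré implies Sobolev), then yields the inequality with exponent $N$ and a constant depending on $N$. This is the only point where some care is needed: the constant must depend on $K$ only through $\exp(C_S(1+\sqrt K R))$. That dependence is exactly what the doubling and Poincaré constants on balls of radius $\le 2R$ provide.

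For the parabolic inequality \eqref{eq:parabolic-Sobolev}, I fix $t$ and apply Hölder with the splitting $|\zeta|^{2\chi}=|\zeta|^2\cdot|\zeta|^{4/N}$ and exponents $N/(N-2)$ and $N/2$:
\[
\int_{B_{2R}}|\zeta|^{2\chi}\le\Bigl(\int|\zeta|^{2N/(N-2)}\Bigr)^{(N-2)/N}\Bigl(\int\zeta^2\Bigr)^{2/N}.
\]
Next I bound the first factor by \eqref{eq:local-Sobolev} and the second by $\bigl(\sup_{t\in J}\int\zeta^2\bigr)^{2/N}$. Integrating over $J$ and raising both sides to the power $1/\chi$ gives \eqref{eq:parabolic-Sobolev}, since $(2/N)/\chi=2/(N\chi)$. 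This step is routine. The only genuine input is the cited local Sobolev inequality and the verification that its constant has the stated form.
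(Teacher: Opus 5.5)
Your proposal is correct and follows the paper's route: the elliptic inequality is quoted from Saloff-Coste's local Sobolev inequality, with the normalization of $K$ and the radius-$2R$ factors absorbed into $C_S$ and any fixed $N>2$ replacing $n$ when $n=2$. You also go further than the paper in two places, both correctly: you justify the $n=2$ case via Bishop--Gromov doubling with exponent $N$ plus the Poincar\'e inequality, and you derive the parabolic inequality, which the paper leaves implicit, by the standard H\"older splitting $|\zeta|^{2\chi}=|\zeta|^2|\zeta|^{4/N}$ with exponents $N/(N-2)$ and $N/2$.
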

	
	\begin{proof}
		For $n>2$, the elliptic inequality follows from the local Sobolev 
		inequality of Saloff-Coste \cite[Theorem~3.1]{SaloffCoste1992}, 
		together with the Bishop--Gromov volume comparison. Saloff-Coste 
		uses the curvature assumption
		\[
		\Ric\ge-(n-1)k^2g;
		\]
		taking $k=\sqrt{K/(n-1)}$ and absorbing the dimensional factors into 
		$C_S$ gives \eqref{eq:local-Sobolev}--\eqref{eq:S_R}. When $n=2$, 
		the same inequality holds with the dimension in the Sobolev exponent 
		replaced by any fixed $N>2$.
	\end{proof}
	
	We next record the evolution formulae used in the Bochner calculation.  Set
	\begin{equation}\label{eq:L-operator}
		\cL=\partial_t-h(v)\Delta
	\end{equation}
	and introduce
	\begin{equation}\label{eq:w-y-z-H}
		w=|\nabla v|^2,
		\qquad y=\frac{w}{h},
		\qquad z=\frac{v_t}{h},
		\qquad H_\alpha=y-\alpha z.
	\end{equation}
	Since $z=\Delta v+y$, one has
	\begin{equation}\label{eq:Delta-v-H}
		z=\frac{y-H_\alpha}{\alpha},
		\qquad
		\Delta v=-\frac{(\alpha-1)y+H_\alpha}{\alpha}.
	\end{equation}
	We shall repeatedly use the following quotient identity.  If $g>0$, then
	\begin{align}
		\cL\left(\frac{f}{g}\right)
		={}&\frac{\cL f}{g}-\frac{f\cL g}{g^2}
		+2h\left\langle\nabla\left(\frac{f}{g}\right),\nabla(\log g)\right\rangle.
		\label{eq:quotient-formula}
	\end{align}
	Indeed, this follows from the product rule for $\cL$ and
	$\cL(g^{-1})=-g^{-2}\cL g-2hg^{-3}|\nabla g|^2$.
	
	\begin{lemma}\label{lem:evolution-identities}
		With the notation above,
		\begin{align}
			\cL y
			={}&2(1+h'(v))\langle\nabla v,\nabla y\rangle
			-2|\nabla^2v|^2-2\Ric(\nabla v,\nabla v)\notag\\
			&+2h'(v) y\Delta v+(h\,h''(v)+h'(v))y^2,
			\label{eq:Ly}\\
			\cL z
			={}&2(1+h'(v))\langle\nabla v,\nabla z\rangle
			+h'(v) z^2+h\,h''(v)yz,
			\label{eq:Lz}\\
			\cL H_\alpha
			={}&2(1+h'(v))\langle\nabla v,\nabla H_\alpha\rangle
			-2|\nabla^2v|^2-2\Ric(\nabla v,\nabla v)\notag\\
			&+(2h'(v)-\alpha h\,h''(v))yz
			+(h\,h''(v)-h'(v))y^2-\alpha h'(v) z^2.
			\label{eq:LH-exact}
		\end{align}
	\end{lemma}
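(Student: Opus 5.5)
The proof is a direct computation. We first derive the evolution of the unnormalized quantities $w=|\nabla v|^2$, $v_t$ and $h=h(v)$ under $\cL$, using only the pressure equation \eqref{eq:pressure-equation-intro} and the Bochner formula. We then pass to $y=w/h$ and $z=v_t/h$ with the quotient identity \eqref{eq:quotient-formula}, taking $g=h$. Since $\nabla\log h=(h'/h)\nabla v$, the correction term $2h\langle\nabla(f/h),\nabla\log h\rangle$ equals $2h'\langle\nabla v,\nabla(f/h)\rangle$. This is the source of the $h'$ part of the drift coefficient $2(1+h')$.

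\emph{Step 1: the coefficient $h$.} By the chain rule, $\cL h = h'v_t - h(h'\Delta v+h''w)$. Substituting $v_t=h\Delta v+w$ gives
\[
\cL h=(h'-hh'')w .
\]

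\emph{Step 2: the quantity $w$.} Differentiating \eqref{eq:pressure-equation-intro} in space gives
\[
\nabla v_t=h'\Delta v\,\nabla v+h\nabla\Delta v+\nabla w,
\]
so $w_t=2h'w\Delta v+2h\langle\nabla v,\nabla\Delta v\rangle+2\langle\nabla v,\nabla w\rangle$. The Bochner formula reads $\Delta w=2|\nabla^2v|^2+2\langle\nabla v,\nabla\Delta v\rangle+2\Ric(\nabla v,\nabla v)$. Subtracting $h\Delta w$ cancels the third-order terms and leaves
\[
\cL w=2h'w\Delta v+2\langle\nabla v,\nabla w\rangle-2h|\nabla^2 v|^2-2h\Ric(\nabla v,\nabla v).
\]

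\emph{Step 3: the quantity $y$.} Apply \eqref{eq:quotient-formula} with $f=w$ and $g=h$, and rewrite the drift using $\nabla w=h\nabla y+h'y\nabla v$. This gives
\[
\frac{2\langle\nabla v,\nabla w\rangle}{h}=2\langle\nabla v,\nabla y\rangle+2h'y^2 .
\]
From Step 1, $-w\cL h/h^2=-(h'-hh'')y^2$. Collecting terms, the $y^2$ coefficient is $2h'-h'+hh''=h'+hh''$, and the drift terms combine to $2(1+h')\langle\nabla v,\nabla y\rangle$. This is exactly \eqref{eq:Ly}.

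\emph{Step 4: the quantity $z$.} Differentiating \eqref{eq:pressure-equation-intro} in time gives
\[
\cL v_t=h'v_t\Delta v+2\langle\nabla v,\nabla v_t\rangle .
\]
Apply the quotient identity again with $f=v_t$, using $\nabla v_t=h\nabla z+h'z\nabla v$ and $\Delta v=z-y$ from \eqref{eq:Delta-v-H}. The $\langle\nabla v,\nabla z\rangle$ terms give the drift $2(1+h')\langle\nabla v,\nabla z\rangle$. The zeroth-order terms are $h'z(z-y)+2h'yz-(h'-hh'')yz$. The $h'yz$ contributions cancel, leaving $h'z^2+hh''yz$, which is \eqref{eq:Lz}.

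\emph{Step 5: the combination $H_\alpha$.} Take $\cL y-\alpha\cL z$; the drift terms combine linearly into $2(1+h')\langle\nabla v,\nabla H_\alpha\rangle$. The remaining task is to rewrite $2h'y\Delta v$ as $2h'y(z-y)$, which produces the $2h'yz$ term and shifts the $y^2$ coefficient to $hh''+h'-2h'=hh''-h'$. The $-\alpha hh''yz$ and $-\alpha h'z^2$ terms come directly from \eqref{eq:Lz}. Together these give \eqref{eq:LH-exact}.

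The only delicate point is bookkeeping. One must track which $h'$ factors come from differentiating $h(v)$ inside the equation and which come from the quotient correction. One must also substitute $\Delta v=z-y$ consistently, so that the $yz$ coefficients cancel correctly in Step 4 and the $y^2$ coefficient comes out right in Step 5. I expect Step 4 to be the main source of sign errors and would double-check it on the pure power case $h(v)=(m-1)v$, where $hh''=0$.
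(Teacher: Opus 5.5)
Your proposal is correct and follows the paper's proof essentially step for step: Bochner gives $\cL w$, the chain rule gives $\cL h=(h'-hh'')w$, the quotient formula is applied with $g=h$ to $w$ and $v_t$, and $\Delta v=z-y$ is substituted. The only difference is cosmetic. You collapse the quotient correction term directly to $2h'\langle\nabla v,\nabla(f/h)\rangle$, whereas the paper expands it into $\frac{2}{h}\langle\nabla f,\nabla h\rangle-\frac{2f}{h^2}|\nabla h|^2$ and recombines the pieces.
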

	
	\begin{proof}
		Let $w=|\nabla v|^2$.  The Bochner formula gives
		\begin{equation}\label{eq:Bochner-w}
			\Delta w=2|\nabla^2v|^2+2\Ric(\nabla v,\nabla v)
			+2\langle\nabla v,\nabla\Delta v\rangle.
		\end{equation}
		On the other hand,
		\[
		w_t=2\langle\nabla v,\nabla v_t\rangle.
		\]
		Differentiating the pressure equation $v_t=h\Delta v+w$ in the spatial
		variables yields
		\begin{equation}\label{eq:gradient-vt}
			\nabla v_t=h\nabla\Delta v+h'(v)(\Delta v)\nabla v+\nabla w.
		\end{equation}
		Consequently,
		\begin{align*}
			\cL w
			&=w_t-h\Delta w\\
			&=2\langle\nabla v,\nabla v_t\rangle
			-2h|\nabla^2v|^2-2h\Ric(\nabla v,\nabla v)
			-2h\langle\nabla v,\nabla\Delta v\rangle\\
			&=2\left\langle\nabla v,
			h\nabla\Delta v+h'(v)(\Delta v)\nabla v+\nabla w\right\rangle
			-2h|\nabla^2v|^2-2h\Ric(\nabla v,\nabla v)\\
			&\qquad -2h\langle\nabla v,\nabla\Delta v\rangle\\
			&=2h'(v)w\Delta v+2\langle\nabla v,\nabla w\rangle
			-2h|\nabla^2v|^2-2h\Ric(\nabla v,\nabla v).
		\end{align*}
		Thus
		\begin{equation}\label{eq:Lw}
			\cL w=2h'(v)w\Delta v+2\langle\nabla v,\nabla w\rangle
			-2h|\nabla^2v|^2-2h\Ric(\nabla v,\nabla v).
		\end{equation}
		
		For any $C^2$ function $\phi=\phi(v)$, the chain rule for
		$\cL=\partial_t-h\Delta$ reads
		\begin{equation}\label{eq:L-chain-rule}
			\cL(\phi(v))=\phi'(v)\cL v-h\phi''(v)|\nabla v|^2.
		\end{equation}
		Since $\cL v=w$, applying \eqref{eq:L-chain-rule} to $\phi=h$ gives
		\begin{equation}\label{eq:La}
			\cL h=(h'(v)-h\,h''(v))w,
			\qquad \nabla h=h'(v)\nabla v,
			\qquad |\nabla h|^2=h'(v)^2w.
		\end{equation}
		
		We next compute $y=w/h$.  Apply the quotient formula
		\eqref{eq:quotient-formula} with $f=w$ and
		$g=h$.  Using \eqref{eq:Lw} and \eqref{eq:La}, we obtain
		\begin{align}
			\cL y
			={}&\frac1h\Bigl(2h'(v)w\Delta v
			+2\langle\nabla v,\nabla w\rangle
			-2h|\nabla^2v|^2-2h\Ric(\nabla v,\nabla v)\Bigr)\notag\\
			&-\frac{w}{h^2}(h'(v)-h\,h''(v))w
			+\frac{2}{h}\langle\nabla w,\nabla h\rangle
			-\frac{2w}{h^2}|\nabla h|^2.\label{eq:Ly-expanded-1}
		\end{align}
		Since $w=hy$ and $\nabla h=h'(v)\nabla v$, the last three terms involving
		only $w$ and $h$ are
		\begin{align*}
			-\frac{w}{h^2}(h'(v)-h\,h''(v))w
			&=(h\,h''(v)-h'(v))y^2,\\
			\frac{2}{h}\langle\nabla w,\nabla h\rangle
			&=\frac{2h'(v)}{h}\langle\nabla w,\nabla v\rangle,\\
			-\frac{2w}{h^2}|\nabla h|^2
			&=-2h'(v)^2y^2.
		\end{align*}
		Moreover, differentiating $w=hy$ gives
		\begin{equation}\label{eq:gradient-w-y}
			\nabla w=h\nabla y+h'(v)y\nabla v.
		\end{equation}
		Hence the two first-order terms in \eqref{eq:Ly-expanded-1} satisfy
		\begin{align*}
			\frac{2}{h}\langle\nabla v,\nabla w\rangle
			+\frac{2h'(v)}{h}\langle\nabla w,\nabla v\rangle
			&=\frac{2(1+h'(v))}h\langle\nabla v,\nabla w\rangle\\
			&=2(1+h'(v))\langle\nabla v,\nabla y\rangle
			+2h'(v)(1+h'(v))y^2.
		\end{align*}
		Substituting these identities into \eqref{eq:Ly-expanded-1}, the zeroth-order
		coefficient becomes
		\[
		(h\,h''(v)-h'(v))-2h'(v)^2+2h'(v)(1+h'(v))=h\,h''(v)+h'(v).
		\]
		Therefore
		\begin{align*}
			\cL y
			={}&2(1+h'(v))\langle\nabla v,\nabla y\rangle
			-2|\nabla^2v|^2-2\Ric(\nabla v,\nabla v)\\
			&+2h'(v) y\Delta v+(h\,h''(v)+h'(v))y^2,
		\end{align*}
		which proves \eqref{eq:Ly}.
		
		To compute $z=v_t/h$, differentiate $v_t=h\Delta v+w$ with respect to
		time.  Since
		$h_t=h'(v) v_t$ and $w_t=2\langle\nabla v,\nabla v_t\rangle$, one has
		\[
		v_{tt}=h'(v) v_t\Delta v+h\Delta v_t
		+2\langle\nabla v,\nabla v_t\rangle.
		\]
		It follows that
		\begin{equation}\label{eq:Lvt}
			\cL(v_t)=h'(v) v_t\Delta v
			+2\langle\nabla v,\nabla v_t\rangle.
		\end{equation}
		Using the quotient formula again, now with $f=v_t$ and $g=h$, gives
		\begin{align}
			\cL z
			={}&h'(v) z\Delta v+\frac{2}{h}\langle\nabla v,\nabla v_t\rangle
			-z(h'(v)-h\,h''(v))y\notag\\
			&+\frac{2h'(v)}{h}\langle\nabla v_t,\nabla v\rangle
			-2h'(v)^2yz.\label{eq:Lz-expanded-1}
		\end{align}
		Because $v_t=hz$,
		\begin{equation}\label{eq:gradient-vt-z}
			\nabla v_t=h\nabla z+h'(v)z\nabla v.
		\end{equation}
		Thus the first-order terms in \eqref{eq:Lz-expanded-1} become
		\begin{align*}
			\frac{2(1+h'(v))}h\langle\nabla v,\nabla v_t\rangle
			&=2(1+h'(v))\langle\nabla v,\nabla z\rangle
			+2h'(v)(1+h'(v))yz.
		\end{align*}
		Combining all the remaining $yz$ terms gives
		\begin{align*}
			\cL z
			={}&2(1+h'(v))\langle\nabla v,\nabla z\rangle+h'(v) z\Delta v\\
			&+\bigl[-h'(v)+h\,h''(v)-2h'(v)^2+2h'(v)(1+h'(v))\bigr]yz\\
			={}&2(1+h'(v))\langle\nabla v,\nabla z\rangle
			+h'(v) z\Delta v+(h'(v)+h\,h''(v))yz.
		\end{align*}
		Since $z=\Delta v+y$, or equivalently $\Delta v=z-y$, the last line reduces
		to
		\[
		\cL z=2(1+h'(v))\langle\nabla v,\nabla z\rangle
		+h'(v) z^2+h\,h''(v)yz,
		\]
		which is \eqref{eq:Lz}.
		
		Finally, $H_\alpha=y-\alpha z$, and hence
		\begin{align*}
			\cL H_\alpha
			={}&\cL y-\alpha\cL z\\
			={}&2(1+h'(v))\langle\nabla v,\nabla H_\alpha\rangle
			-2|\nabla^2v|^2-2\Ric(\nabla v,\nabla v)\\
			&+2h'(v) y\Delta v+(h\,h''(v)+h'(v))y^2
			-\alpha h'(v) z^2-\alpha h\,h''(v)yz.
		\end{align*}
		Using once more $\Delta v=z-y$, we have
		$2h'(v) y\Delta v=2h'(v) yz-2h'(v) y^2$, and therefore
		\begin{align*}
			\cL H_\alpha
			={}&2(1+h'(v))\langle\nabla v,\nabla H_\alpha\rangle
			-2|\nabla^2v|^2-2\Ric(\nabla v,\nabla v)\\
			&+(2h'(v)-\alpha h\,h''(v))yz
			+(h\,h''(v)-h'(v))y^2-\alpha h'(v) z^2.
		\end{align*}
		This proves \eqref{eq:LH-exact}.
	\end{proof}
	
	\begin{lemma}\label{lem:pointwise}
		Let $\phi=\phi(t)\ge0$ be $C^1$ and assume that
		\eqref{eq:barrier-Riccati-main} holds
		on the time interval under consideration, and put
		\begin{equation}\label{eq:f-positive-part}
			f=(H_\alpha-\phi)_+.
		\end{equation}
		Under the remaining assumptions of Theorem~\ref{thm:barrier-main},
		\begin{equation}\label{eq:pointwise-coercive}
			\cL f\le
			2\bigl(1+h'(v)\bigr)\langle\nabla v,\nabla f\rangle
			-\frac{2}{n\alpha^2}f^2
			-\frac{4(\alpha-1)}{n\alpha^2}yf
		\end{equation}
		in the sense of distributions on $\Q_{2R}$.
	\end{lemma}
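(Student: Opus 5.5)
The plan is to compute $\cL H_\alpha$ pointwise with the exact identity \eqref{eq:LH-exact}, show that every term involving $h'$ or $hh''$ has a favorable sign on the set $\{H_\alpha>\phi\}$, and estimate the Hessian and Ricci terms. The resulting quadratic in $y$ is then absorbed using \eqref{eq:barrier-Riccati-main}. The passage to the positive part $f$ is a Kato-type argument done at the end.

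\emph{Step 1: sign of the constitutive terms.} In \eqref{eq:LH-exact} I group the $hh''$ terms and the $h'$ terms separately. Using $y-\alpha z=H_\alpha$, the $hh''$ terms combine as
\[
-\alpha hh''yz+hh''y^2=hh''\,y\,H_\alpha ,
\]
which is $\le0$ wherever $H_\alpha>\phi\ge0$, because $hh''\le0$ by \eqref{eq:pressure-concavity-main} and $y\ge0$. The $h'$ terms combine as
\[
2h'yz-h'y^2-\alpha h'z^2=-h'\bigl((y-z)^2+(\alpha-1)z^2\bigr),
\]
which is $\le0$ since $h'\ge0$ and $\alpha>1$. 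Both groups are therefore dropped.

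\emph{Step 2: Hessian and Ricci terms.} For the Hessian term I use $|\nabla^2v|^2\ge(\Delta v)^2/n$ together with \eqref{eq:Delta-v-H}, which gives
\[
-2|\nabla^2 v|^2\le-\frac{2}{n\alpha^2}\bigl((\alpha-1)y+H_\alpha\bigr)^2 .
\]
For the curvature term, $\Ric\ge-Kg$ gives $-2\Ric(\nabla v,\nabla v)\le 2Kw=2Khy\le 2Kh_+y$.

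\emph{Step 3: substitution and absorption.} On $\{H_\alpha>\phi\}$ I set $f=H_\alpha-\phi$, so $\cL f=\cL H_\alpha-\phi'$. Expanding the square with $H_\alpha=f+\phi$ produces the desired terms $-\frac{2}{n\alpha^2}f^2-\frac{4(\alpha-1)}{n\alpha^2}yf$, together with $-\frac{4}{n\alpha^2}f\phi\le0$, which I discard. What remains is
\[
-\phi'-\frac{2}{n\alpha^2}\phi^2+Ay-By^2,\qquad A=2Kh_+-\frac{4(\alpha-1)}{n\alpha^2}\phi,\quad B=\frac{2(\alpha-1)^2}{n\alpha^2}.
\]
Maximizing over $y\ge0$ gives $Ay-By^2\le A_+^2/(4B)=\frac{n\alpha^2}{8(\alpha-1)^2}A_+^2$. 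Hence the remainder is $\le0$ precisely by \eqref{eq:barrier-Riccati-main}. The gradient term is $2(1+h')\langle\nabla v,\nabla H_\alpha\rangle=2(1+h')\langle\nabla v,\nabla f\rangle$ there. This proves \eqref{eq:pointwise-coercive} classically on the open set $\{H_\alpha>\phi\}$.

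\emph{Step 4: distributional statement (the main technical point).} Since $H_\alpha-\phi$ is $C^1$ in $t$ and smooth in $x$, I would take convex smooth approximations $\psi_\varepsilon(s)\uparrow s_+$ with $0\le\psi_\varepsilon'\le1$, $\psi_\varepsilon''\ge0$, and $\psi_\varepsilon'$ supported in $\{s>0\}$. Writing $g=H_\alpha-\phi$, the chain rule gives
\[
\cL\psi_\varepsilon(g)=\psi_\varepsilon'(g)\,\cL g-h\,\psi_\varepsilon''(g)|\nabla g|^2\le\psi_\varepsilon'(g)\,\cL g .
\]
Steps 1--3 bound $\cL g$ on the support of $\psi_\varepsilon'$. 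I then test against nonnegative $\eta\in C_0^\infty(\Q_{2R})$, integrating the $h\Delta$ part by parts as $\int\langle\nabla\psi_\varepsilon(g),\nabla(h\eta)\rangle$, and let $\varepsilon\to0$ by dominated convergence. In the limit $\psi_\varepsilon(g)\to f$, $\psi_\varepsilon'(g)\nabla g\to\nabla f$ a.e., and $\psi_\varepsilon'(g)(\ldots)\to\chi_{\{g>0\}}(\ldots)$, with $f^2$ and $yf$ vanishing off $\{g>0\}$. This yields \eqref{eq:pointwise-coercive} in the sense of distributions.
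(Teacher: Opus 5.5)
Your proposal is correct and follows the paper's proof. It uses the same Cauchy--Schwarz and Ricci bounds in \eqref{eq:LH-exact}, the same one-variable quadratic in $y$ closed by the discriminant condition \eqref{eq:barrier-Riccati-main}, and the same convex-approximation argument for the distributional statement. The only difference is presentational: you group the constitutive terms as $hh''\,yH_\alpha$ and $-h'\bigl((y-z)^2+(\alpha-1)z^2\bigr)$ before substituting, which shows their signs at once, whereas the paper substitutes $z$ and $\Delta v$ first and reads the signs off the fully expanded formula \eqref{eq:Lf-full-expansion}.
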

	
	\begin{proof}
		On the open set $\Omega_+=\{H_\alpha>\phi\}$, one has
		$H_\alpha=f+\phi$, $\nabla H_\alpha=\nabla f$, and
		$\cL f=\cL H_\alpha-\phi'$.  The Cauchy--Schwarz inequality and the
		Ricci lower bound give, respectively,
		\begin{equation}\label{eq:Hessian-Ricci-bound}
			|\nabla^2v|^2\ge\frac1n(\Delta v)^2,
			\qquad
			-2\Ric(\nabla v,\nabla v)\le2Kh(v)y,
		\end{equation}
		and hence
		\[
		-2|\nabla^2v|^2-2\Ric(\nabla v,\nabla v)
		\le-\frac2n(\Delta v)^2+2Kh(v)y.
		\]
		Moreover, \eqref{eq:Delta-v-H} becomes
		\begin{equation}\label{eq:z-Delta-on-positive-set}
			z=\frac1\alpha(y-f-\phi),
			\qquad
			\Delta v=-\frac1\alpha\bigl((\alpha-1)y+f+\phi\bigr).
		\end{equation}
		Substituting these estimates directly into \eqref{eq:LH-exact} gives
		\begin{align}
			\cL f
			\le{}&
			2\bigl(1+h'(v)\bigr)\langle\nabla v,\nabla f\rangle
			-\frac2n
			\left[
			\frac{(\alpha-1)y+f+\phi}{\alpha}
			\right]^2
			+2Kh(v)y\notag\\
			&+\bigl(2h'(v)-\alpha h(v)h''(v)\bigr)
			y\left[\frac{y-f-\phi}{\alpha}\right]
			+\bigl(h(v)h''(v)-h'(v)\bigr)y^2\notag\\
			&-\frac{h'(v)}{\alpha}(y-f-\phi)^2-\phi'.
			\label{eq:Lf-before-expansion}
		\end{align}
		Expanding and collecting like terms gives the complete formula
		\begin{align}
			\cL f
			\le{}&
			2\bigl(1+h'(v)\bigr)\langle\nabla v,\nabla f\rangle
			-\frac{2+n\alpha h'(v)}{n\alpha^2}f^2\notag\\
			&+\left(
			h(v)h''(v)-\frac{4(\alpha-1)}{n\alpha^2}
			\right)yf
			-\frac{2(2+n\alpha h'(v))}{n\alpha^2}f\phi\notag\\
			&-\frac{2(\alpha-1)^2+n\alpha(\alpha-1)h'(v)}
			{n\alpha^2}y^2\notag\\
			&+\left(
			h(v)h''(v)-\frac{4(\alpha-1)}{n\alpha^2}
			\right)y\phi
			+2Kh(v)y
			-\frac{2+n\alpha h'(v)}{n\alpha^2}\phi^2-\phi'.
			\label{eq:Lf-full-expansion}
		\end{align}
		The sign of the $h(v)h''(v)$ contribution to both $yf$ and $y\phi$
		in \eqref{eq:Lf-full-expansion} is positive.  Since
		\[
		h'(v)=\theta(u)\ge0,
		\qquad
		h(v)h''(v)=\rho(u)=u\theta'(u)\le0,
		\]
		and $f,y,\phi\ge0$, all the corresponding terms are nonpositive.  We
		therefore obtain
		\begin{align}
			\cL f
			\le{}&
			2\bigl(1+h'(v)\bigr)\langle\nabla v,\nabla f\rangle
			-\frac{2}{n\alpha^2}f^2
			-\frac{4(\alpha-1)}{n\alpha^2}yf
			+Q_\phi(y)-\phi',\label{eq:Lf-reduced}\\
			Q_\phi(y):={}&-\frac{2(\alpha-1)^2}{n\alpha^2}y^2
			+\left(2Kh(v)-\frac{4(\alpha-1)}{n\alpha^2}\phi\right)y
			-\frac{2}{n\alpha^2}\phi^2.
			\label{eq:Qy-definition}
		\end{align}
		This is the promised one-variable quadratic.  If
		$D=2Kh(v)-4(\alpha-1)\phi/(n\alpha^2)$, then its maximum on
		$[0,\infty)$ is
		\begin{equation}\label{eq:quadratic-maximum}
			\max_{y\ge0}Q_\phi(y)
			=-\frac{2}{n\alpha^2}\phi^2
			+\frac{n\alpha^2}{8(\alpha-1)^2}(D_+)^2
			\le
			-\frac{2}{n\alpha^2}\phi^2
			+\frac{n\alpha^2}{8(\alpha-1)^2}
			\left(
			2Kh_+-\frac{4(\alpha-1)}{n\alpha^2}\phi
			\right)_+^2.
		\end{equation}
		Thus \eqref{eq:barrier-Riccati-main} is exactly a sufficient
		discriminant condition for $Q_\phi(y)-\phi'\le0$.  This proves
		\eqref{eq:pointwise-coercive} pointwise on $\Omega_+$.
		
		For completeness, choose convex functions
		$j_\varepsilon\in C^2(\mathbb R)$ with $j_\varepsilon'=0$ on
		$(-\infty,0]$, $0\le j_\varepsilon'\le1$, $j_\varepsilon''\ge0$, and
		$j_\varepsilon'=1$ on $[\varepsilon,\infty)$.  For
		$q=H_\alpha-\phi$,
		\[
		\cL j_\varepsilon(q)
		=j_\varepsilon'(q)\cL q
		-hj_\varepsilon''(q)|\nabla q|^2
		\le j_\varepsilon'(q)\cL q.
		\]
		Letting $\varepsilon\downarrow0$ in the tested inequality gives
		\eqref{eq:pointwise-coercive} in distributions.
	\end{proof}
	
	Lemma~\ref{lem:pointwise} is the differential input for the general barrier
	theorem.  Substitution shows directly that the constant
	\eqref{eq:explicit-barrier} is admissible.

	\section{Proof of the main theorem and its consequences}\label{sec:proof}
	
	Throughout this section, all space--time integrals are taken with respect to
	$\dd\mu\dd t$.  We first isolate the analytic part of the argument.  Let
	$c_0,d_0>0$, assume $0<h_-\le h\le h_+<\infty$, and put
	\[
	\Lambda:=\sup_{\Q_{2R}}|2+h'(v)|<\infty,
	\qquad
	\mathscr D=(n,h_-,h_+,c_0,d_0,\Lambda).
	\]
	Whenever $f\ge0$ satisfies the coercive differential inequality in
	Lemma~\ref{lem:weak}, these are precisely the quantities entering the energy
	estimates.  The specific values supplied by the Bochner calculation will only be
	inserted in the proof of Theorem~\ref{thm:barrier-main}.
	
	\subsection{Energy inequalities}
	
	\begin{lemma}\label{lem:weak}
		Suppose $f\ge0$ satisfies
		\[
		\cL f\le2(1+h'(v))\langle\nabla v,\nabla f\rangle
		-c_0f^2-d_0yf
		\]
		in distributions, where $c_0,d_0>0$.  Then, for every nonnegative
		$\psi\in C_0^\infty(\Q_{2R})$,
		\begin{align}
			\int\!\!\int
			\bigl(\psi f_t+h\langle\nabla\psi,\nabla f\rangle\bigr)
			\leq
			\int\!\!\int
			\Bigl((2+h'(v))\psi\langle\nabla v,\nabla f\rangle
			-c_0\psi f^2-d_0\psi yf\Bigr).
			\label{eq:weak-form}
		\end{align}
	\end{lemma}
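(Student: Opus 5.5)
The plan is to test the distributional inequality against $\psi$ and move the diffusion term onto $\psi$ by a single integration by parts. The extra term this produces from differentiating the variable coefficient $h(v)$ is what turns the drift coefficient $2(1+h'(v))$ into $(2+h'(v))$.

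First I settle the regularity. Since $f=(H_\alpha-\phi)_+$ and $H_\alpha-\phi$ is $C^1$ in space--time (indeed smooth in $x$), $f$ is locally Lipschitz on $\Q_{2R}$. Hence $f_t$ and $\nabla f$ exist a.e. and are locally bounded, and every integral in \eqref{eq:weak-form} is finite for $\psi\in C_0^\infty(\Q_{2R})$. I read the hypothesis in the sense produced by Lemma~\ref{lem:pointwise}. For the smooth approximants $f_\varepsilon=j_\varepsilon(q)$ with $q=H_\alpha-\phi$, one has pointwise
\[
\cL f_\varepsilon\le j_\varepsilon'(q)\Bigl(2(1+h')\langle\nabla v,\nabla q\rangle-c_0(q_+)^2-d_0yq_+\Bigr)+o(1),
\]
which uses that $\cL q$ obeys the coercive bound on $\{q>0\}$. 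The inequality for $f$ is the limit of these as $\varepsilon\downarrow0$. So I prove \eqref{eq:weak-form} for smooth functions and then pass to the limit.

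For a smooth $g$ (namely $g=f_\varepsilon$), multiply $\cL g=g_t-h\Delta g$ by $\psi\ge0$ and integrate over $\Q_{2R}$. There are no boundary terms because $\psi$ has compact support, and the divergence theorem on $B_{2R}$ gives
\[
-\int\!\!\int\psi h\Delta g=\int\!\!\int h\langle\nabla\psi,\nabla g\rangle+\int\!\!\int\psi\langle\nabla h,\nabla g\rangle .
\]
By \eqref{eq:La}, $\nabla h=h'(v)\nabla v$, so the last term equals $\int\!\!\int\psi h'(v)\langle\nabla v,\nabla g\rangle$. Hence
\[
\int\!\!\int\bigl(\psi g_t+h\langle\nabla\psi,\nabla g\rangle\bigr)=\int\!\!\int\psi\,\cL g-\int\!\!\int\psi h'(v)\langle\nabla v,\nabla g\rangle .
\]
Now insert the upper bound for $\cL g$. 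The drift terms combine as $2(1+h')-h'=2+h'$, which gives \eqref{eq:weak-form} for $g$, with $q_+$ and $j_\varepsilon'(q)$ weights in the zeroth-order terms.

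It remains to let $\varepsilon\downarrow0$. On $\supp\psi$ we have $j_\varepsilon(q)\to f$ uniformly, and $\nabla j_\varepsilon(q)=j'_\varepsilon(q)\nabla q\to\mathbf 1_{\{q>0\}}\nabla q=\nabla f$ a.e. with a uniform bound; the same holds for the time derivative. Moreover, $j'_\varepsilon(q)(q_+)^2\to f^2$ and $j'_\varepsilon(q)yq_+\to yf$ boundedly. Dominated convergence then passes every term to the limit.

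The main obstacle is this limiting step rather than the algebra. One has to make sure that the hypothesis ``in distributions'' really comes with the first-order term $\langle\nabla v,\nabla f\rangle$ as an $L^1_{\rm loc}$ function, so that the drift and time-derivative terms converge. The convexity of $j_\varepsilon$, which discards $-hj_\varepsilon''|\nabla q|^2\le0$, is exactly what keeps the inequality in the right direction throughout.
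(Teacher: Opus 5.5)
Your argument is correct and is essentially the paper's proof: test with $\psi$, integrate $-\int\!\!\int\psi h\Delta f$ by parts once, use $\nabla h=h'(v)\nabla v$, and combine $2(1+h'(v))-h'(v)=2+h'(v)$. The only difference is that you justify the integration by parts through the approximants $j_\varepsilon(q)$ for the specific $f=(H_\alpha-\phi)_+$, whereas the paper does that regularization in Lemma~\ref{lem:pointwise} and treats a general $f$ formally here; your $o(1)$ term is also unnecessary, since $j_\varepsilon'(q)=0$ off $\{q>0\}$, where the coercive bound holds exactly.
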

	
	\begin{proof}
		Let $\psi\geq0$ be smooth and compactly supported.  Multiply the assumed weak differential inequality by $\psi$ and integrate.  The
		second-order term is treated by spatial integration by parts:
		\begin{align*}
			\int\!\!\int\psi\cL f
			&=\int\!\!\int\psi f_t-\int\!\!\int\psi h\Delta f\\
			&=\int\!\!\int\psi f_t
			+\int\!\!\int\langle\nabla(h\psi),\nabla f\rangle\\
			&=\int\!\!\int\psi f_t
			+\int\!\!\int h\langle\nabla\psi,\nabla f\rangle
			+\int\!\!\int\psi\langle\nabla h,\nabla f\rangle.
		\end{align*}
		Since $\nabla h=h'(v)\nabla v$, the last term equals
		\[
		\int\!\!\int h'(v)\psi\langle\nabla v,\nabla f\rangle.
		\]
		The right-hand side of the assumed differential inequality, after multiplication
		by $\psi$, contains
		\[
		2(1+h'(v))\psi\langle\nabla v,\nabla f\rangle.
		\]
		Moving the contribution
		coming from $\nabla h$ to the right changes the coefficient to
		$2(1+h'(v))-h'(v)=2+h'(v)$.  This gives \eqref{eq:weak-form}.
	\end{proof}
	
	\begin{lemma}\label{lem:Caccioppoli}
		Let $b\geq1$.  Let $J=(t_0,t_1]$, and let $\lambda=\lambda(t)$ and
		$\eta=\eta(x)$ be smooth cutoffs satisfying $0\leq\lambda,\eta\leq1$ and
		$\lambda(t_0)=0$.  Set
		\begin{equation}\label{eq:b-threshold}
			\Lambda:=\sup_{\Q_{2R}}|2+h'(v)|<\infty,
			\qquad b_*:=\max\left\{1,\frac{2\Lambda^2}{d_0}\right\}.
		\end{equation}
		If $b\geq b_*$
		then
		\begin{align}
			&\frac1{b+1}\sup_{t\in J}\int \lambda\eta^2f^{b+1}\dd\mu
			+\frac b2\int_J\!\!\int h\lambda\eta^2f^{b-1}|\nabla f|^2
			+c_0\int_J\!\!\int\lambda\eta^2f^{b+2}\notag\\
			&\qquad
			+\frac{d_0}{2}\int_J\!\!\int\lambda\eta^2yf^{b+1}
			\leq
			2\int_J\!\!\int
			\left(\frac{|\lambda'|}{b+1}\eta^2
			+\frac{4h_+}{b}\lambda|\nabla\eta|^2\right)f^{b+1}.
			\label{eq:Caccioppoli}
		\end{align}
	\end{lemma}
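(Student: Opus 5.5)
We test the weak inequality \eqref{eq:weak-form} of Lemma~\ref{lem:weak} with $\psi=\lambda\eta^2 f^b$, restricted to a time slab $(t_0,t]$ with $t\in J$. Strictly speaking $f^b$ is only Lipschitz, so we first use $\psi=\lambda\eta^2(f+\delta)^b$, or a Steklov average in time, and let $\delta\downarrow0$. Since $f$ is a positive part of a smooth function, it is locally Lipschitz, so all the integrals below make sense. Integrating the time term by parts and using $\lambda(t_0)=0$ gives
\[
\int_{t_0}^{t}\!\!\int\lambda\eta^2f^bf_t=\frac1{b+1}\int\lambda\eta^2f^{b+1}\Big|_{t}-\frac1{b+1}\int_{t_0}^t\!\!\int\lambda'\eta^2f^{b+1}.
\]
The last integral is bounded by $\frac{1}{b+1}\int_J\int|\lambda'|\eta^2f^{b+1}$.

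For the diffusion term we compute
\[
h\langle\nabla\psi,\nabla f\rangle=bh\lambda\eta^2f^{b-1}|\nabla f|^2+2h\lambda\eta f^b\langle\nabla\eta,\nabla f\rangle.
\]
The first piece is the good gradient term. The cross piece is moved to the right and absorbed by Young's inequality:
\[
2h\lambda\eta f^b|\nabla\eta||\nabla f|\le\frac b4h\lambda\eta^2f^{b-1}|\nabla f|^2+\frac4bh_+\lambda|\nabla\eta|^2f^{b+1}.
\]
This uses $h\le h_+$ and is the source of the $4h_+/b$ term.

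The main point is the drift term $(2+h'(v))\psi\langle\nabla v,\nabla f\rangle$, which contains the uncontrolled $\nabla v$. Here we use $|\nabla v|^2=hy$ together with $|2+h'|\le\Lambda$:
\[
\Lambda\lambda\eta^2f^b|\nabla v||\nabla f|\le\frac b4h\lambda\eta^2f^{b-1}|\nabla f|^2+\frac{\Lambda^2}{b}\lambda\eta^2yf^{b+1}.
\]
Young's inequality is applied with weights $\tfrac{b}{4}hf^{b-1}|\nabla f|^2$ and $\tfrac{1}{b}\Lambda^2|\nabla v|^2f^{b+1}/h$, and $|\nabla v|^2/h=y$ turns the second into the $y$ term. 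The threshold $b\ge b_*\ge 2\Lambda^2/d_0$ gives $\Lambda^2/b\le d_0/2$, so this term is absorbed by half of the coercive term $d_0\int\int\lambda\eta^2yf^{b+1}$. That leaves $\frac{d_0}{2}$ on the left, and the two absorptions together leave $\frac b2$ of the gradient term. The term $-c_0\psi f^2$ gives $c_0\int\int\lambda\eta^2f^{b+2}$ directly.

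Collecting the terms, for each $t\in J$ we get
\[
\frac1{b+1}\int\lambda\eta^2f^{b+1}\Big|_t+(\text{the three good terms on }(t_0,t])\le\int_J\!\!\int\Big(\frac{|\lambda'|}{b+1}\eta^2+\frac{4h_+}{b}\lambda|\nabla\eta|^2\Big)f^{b+1}.
\]
Taking the supremum over $t$ in the first term, and $t=t_1$ in the others, then adding the two resulting inequalities, yields \eqref{eq:Caccioppoli} with the factor $2$ on the right.

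The step I expect to be the main obstacle is the drift absorption, since it is the only place the structure $y=|\nabla v|^2/h$ and the threshold $b_*$ enter. The justification of the test function near $\{f=0\}$ needs care but is routine.
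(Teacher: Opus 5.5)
Your proposal is correct and follows the paper's proof essentially line by line. It uses the same test function $\psi=\lambda\eta^2f^b$ on $(t_0,t]$, regularized through $f+\delta$ and a Steklov average, and the same two Young splittings, each costing $\frac b4$ of the gradient term. The drift is absorbed the same way, via $|\nabla v|^2=hy$ and $\Lambda^2/b\le d_0/2$, and the factor $2$ again comes from adding the supremum bound to the $t=t_1$ bound. The paper additionally spells out a smooth terminal cutoff converging to the indicator of $(t_0,t]$; this is the step that justifies restricting the compactly supported test function to the slab.
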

	
	\begin{proof}
		For a rigorous use of this $f$-dependent test, replace $f$ by
		$f_\varepsilon=f+\varepsilon$, take a Steklov average in time, and insert a
		smooth terminal cutoff converging to the indicator of $(t_0,t]$.  Perform
		the calculation for the resulting admissible test function, then pass in
		the following order: first let the Steklov parameter tend to zero, next let
		the terminal cutoff converge to the time indicator, and finally let
		$\varepsilon\downarrow0$.  The weak lower semicontinuity of the gradient
		term and monotone convergence for the nonnegative zeroth-order terms give
		the inequality below.  Since $b\geq1$, we suppress this notation.  Fix
		$t\in J$ and use
		\[
		\psi=\lambda\eta^2f^b
		\]
		in \eqref{eq:weak-form}, integrating over $M\times(t_0,t]$.
		
		For the time derivative, the assumption $\lambda(t_0)=0$ gives
		\begin{align}
			\int_{t_0}^t\!\!\int\lambda\eta^2f^bf_t
			&=\frac1{b+1}\int_{t_0}^t\!\!\int
			\lambda\eta^2\partial_t(f^{b+1})\notag\\
			&=\frac1{b+1}\int\lambda(t)\eta^2f^{b+1}(t)\dd\mu
			-\frac1{b+1}\int_{t_0}^t\!\!\int
			\lambda'\eta^2f^{b+1}.
			\label{eq:Cacc-time}
		\end{align}
		The spatial derivative of the test function is
		\[
		\nabla(\lambda\eta^2f^b)
		=2\lambda\eta f^b\nabla\eta
		+b\lambda\eta^2f^{b-1}\nabla f.
		\]
		Therefore
		\begin{align}
			\int_{t_0}^t\!\!\int h
			\langle\nabla(\lambda\eta^2f^b),\nabla f\rangle
			={}&b\int_{t_0}^t\!\!\int
			h\lambda\eta^2f^{b-1}|\nabla f|^2\notag\\
			&+2\int_{t_0}^t\!\!\int
			h\lambda\eta f^b\langle\nabla\eta,\nabla f\rangle.
			\label{eq:Cacc-diffusion}
		\end{align}
		
		We next estimate the drift term.  By the definition of $\Lambda$ and the
		identity $|\nabla v|^2=hy$,
		\begin{align*}
			|(2+h'(v))\lambda\eta^2f^b
			\langle\nabla v,\nabla f\rangle|
			&\leq\Lambda\lambda\eta^2f^b\sqrt{hy}\,|\nabla f|\\
			&=\Lambda
			\bigl(\sqrt{h\lambda}\,\eta f^{(b-1)/2}|\nabla f|\bigr)
			\bigl(\sqrt\lambda\,\eta f^{(b+1)/2}\sqrt y\bigr).
		\end{align*}
		Young's inequality gives
		\begin{align}
			|(2+h'(v))\lambda\eta^2f^b
			\langle\nabla v,\nabla f\rangle|
			\leq{}&\frac b4h\lambda\eta^2f^{b-1}|\nabla f|^2
			+\frac{\Lambda^2}{b}\lambda\eta^2yf^{b+1}\notag\\
			\leq{}&\frac b4h\lambda\eta^2f^{b-1}|\nabla f|^2
			+\frac{d_0}{2}\lambda\eta^2yf^{b+1},
			\label{eq:Cacc-drift}
		\end{align}
		where the last inequality follows from \eqref{eq:b-threshold}.  Similarly,
		the cutoff cross term in \eqref{eq:Cacc-diffusion} satisfies
		\begin{align}
			2h\lambda\eta f^b|\nabla\eta|\,|\nabla f|
			\leq{}&\frac b4h\lambda\eta^2f^{b-1}|\nabla f|^2
			+\frac{4h}{b}\lambda|\nabla\eta|^2f^{b+1}\notag\\
			\leq{}&\frac b4h\lambda\eta^2f^{b-1}|\nabla f|^2
			+\frac{4h_+}{b}\lambda|\nabla\eta|^2f^{b+1}.
			\label{eq:Cacc-cross}
		\end{align}
		
		Substituting \eqref{eq:Cacc-time}--\eqref{eq:Cacc-cross} into the weak
		formulation, we move the favorable $f^{b+2}$- and $yf^{b+1}$-terms to the
		left.  After absorbing the two $b/4$ contributions to the gradient term, we find
		\begin{align*}
			&\frac1{b+1}\int\lambda(t)\eta^2f^{b+1}(t)\dd\mu
			+\frac b2\int_{t_0}^t\!\!\int
			h\lambda\eta^2f^{b-1}|\nabla f|^2\\
			&\quad+c_0\int_{t_0}^t\!\!\int\lambda\eta^2f^{b+2}
			+\frac{d_0}{2}\int_{t_0}^t\!\!\int\lambda\eta^2yf^{b+1}\\
			&\leq\int_{t_0}^t\!\!\int
			\left(\frac{|\lambda'|}{b+1}\eta^2
			+\frac{4h_+}{b}\lambda|\nabla\eta|^2\right)f^{b+1}.
		\end{align*}
		For each $t\in J$, the right-hand side is bounded by its integral over all of $J$.
		Taking the supremum controls the boundary term, while choosing $t=t_1$ controls
		the full space--time integrals.  Adding these two bounds introduces the factor
		$2$ on the right and proves \eqref{eq:Caccioppoli}.
	\end{proof}
	
	\begin{lemma}\label{lem:master}
		There are constants $\kappa_1,\kappa_2,\kappa_3>0$, depending only on
		$n,h_-,h_+$, and $c_0$, such that, whenever $b\geq b_*$,
		\begin{align}
			&\left(\int_J\!\!\int_{B_{2R}}
			\lambda^\chi\eta^{2\chi}f^{\chi(b+1)}\dd\mu\dd t\right)^{1/\chi}
			\notag\\
			&\quad\leq
			\mathcal S_R^{1/\chi}\kappa_1
			\int_J\!\!\int_{B_{2R}}
			\Bigl[|\lambda'|\eta^2
			+\kappa_2\lambda\bigl(|\nabla\eta|^2+R^{-2}\eta^2\bigr)\Bigr]
			f^{b+1}\dd\mu\dd t\notag\\
			&\qquad
			-\mathcal S_R^{1/\chi}\kappa_1\kappa_3b
			\int_J\!\!\int_{B_{2R}}\lambda\eta^2f^{b+2}\dd\mu\dd t.
			\label{eq:master}
		\end{align}
		The full expression on the right-hand side is nonnegative.
	\end{lemma}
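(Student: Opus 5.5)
We apply the parabolic Sobolev inequality \eqref{eq:parabolic-Sobolev} of Lemma~\ref{lem:local-Sobolev} to the function $\zeta=\sqrt\lambda\,\eta f^{(b+1)/2}$, and then bound both factors on its right-hand side by the Caccioppoli estimate \eqref{eq:Caccioppoli}. Since $\zeta^{2\chi}=\lambda^\chi\eta^{2\chi}f^{\chi(b+1)}$, the left-hand side of \eqref{eq:parabolic-Sobolev} is exactly the left-hand side of \eqref{eq:master}.

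First, $\sup_J\int\zeta^2\le(b+1)\,\mathcal R$, where
\[
\mathcal R:=2\int_J\!\!\int\Bigl(\tfrac{|\lambda'|}{b+1}\eta^2+\tfrac{4h_+}{b}\lambda|\nabla\eta|^2\Bigr)f^{b+1}
\]
is the right-hand side of \eqref{eq:Caccioppoli}, after the favourable terms have been dropped. Second,
\[
|\nabla\zeta|^2\le 2\lambda|\nabla\eta|^2f^{b+1}+\tfrac{(b+1)^2}{2}\lambda\eta^2f^{b-1}|\nabla f|^2 .
\]
Using $h\ge h_-$ and the gradient term $\tfrac b2\int\!\!\int h\lambda\eta^2f^{b-1}|\nabla f|^2$ of \eqref{eq:Caccioppoli}, this gives
\[
\int_J\!\!\int\bigl(|\nabla\zeta|^2+R^{-2}\zeta^2\bigr)\le C\,b\,\mathcal R'-C\,b\,c_0\int_J\!\!\int\lambda\eta^2f^{b+2}.
\]
Here $\mathcal R'$ is $\mathcal R$ plus the $R^{-2}\lambda\eta^2 f^{b+1}$ term. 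We keep the $c_0 f^{b+2}$ term of \eqref{eq:Caccioppoli} on the right with a negative sign, rather than discarding it. This is exactly the source of the $-\kappa_3 b$ term in \eqref{eq:master}.

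Combining, the right-hand side of \eqref{eq:parabolic-Sobolev} is
\[
\mathcal S_R^{1/\chi}\,\bigl((b+1)\mathcal R\bigr)^{2/(N\chi)}\bigl(Cb\mathcal R'-Cbc_0 X\bigr)^{1/\chi},
\qquad X:=\int_J\!\!\int\lambda\eta^2f^{b+2}.
\]
Note that $2/(N\chi)+1/\chi=1$. To turn this product into the linear form \eqref{eq:master}, use $A^{1-1/\chi}B^{1/\chi}\le\max(A,B)\le A+B$, or weighted Young, with $A=(b+1)\mathcal R$ and $B=Cb(\mathcal R'-c_0X)$. The negative $X$ part must survive this step, so we do not absorb it inside a maximum.

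A cleaner route is the following. From \eqref{eq:Caccioppoli}, $c_0X\le\mathcal R$, so $0\le B$. Then $A^{1-1/\chi}B^{1/\chi}\le \kappa(A+B)$, and $A+B$ is linear in $\mathcal R'$ and $X$. It has the form $\kappa_1\bigl[\,|\lambda'|\eta^2+\kappa_2\lambda(|\nabla\eta|^2+R^{-2}\eta^2)\bigr]$ integrated against $f^{b+1}$, minus $\kappa_1\kappa_3 bX$. The $b$-dependence of the cutoff coefficients is handled as follows: the $|\lambda'|$ term carries $b\cdot\frac1{b+1}\le1$, and the $|\nabla\eta|^2$ term carries $b\cdot\frac{4h_+}{b}$, so both are bounded independently of $b$. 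The gradient conversion, however, contributes a factor $(b+1)^2/b\sim b$. Hence in the honest bookkeeping $\kappa_1$ may need a factor of $b$, or else one uses $b\le$ some multiple. I would accept $\kappa$'s depending only on $n,h_\pm,c_0$ after absorbing powers of $b$ that are harmless for the Moser iteration, adjusting if necessary.

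The main obstacle is this final step: passing from the product form of \eqref{eq:parabolic-Sobolev} to the additive right-hand side while retaining the negative $f^{b+2}$ term with coefficient proportional to $b$. I would handle it by using $B\ge0$, which is guaranteed by the Caccioppoli inequality itself; this same fact yields the stated nonnegativity of the full right-hand side. I would also track the $b$-powers explicitly.
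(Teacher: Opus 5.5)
Your architecture is the paper's: apply Sobolev to $\omega=\lambda^{1/2}\eta f^{(b+1)/2}$, control both factors by \eqref{eq:Caccioppoli} while keeping the $-c_0\mathcal I_b$ term, and collapse the product using $2/(N\chi)+1/\chi=1$. Your weighted AM--GM step $A^{1-1/\chi}B^{1/\chi}\le A+B$ is valid once $A,B\ge0$, and it is equivalent to the paper's device of bounding both factors by one common quantity $\mathcal E_b$.

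The gap is the $b$-bookkeeping, which is exactly what the lemma asserts, and you leave it open. You also locate the difficulty in the wrong place.

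\emph{The factor $(b+1)^2/b$ is harmless.} It multiplies only $\mathcal R-c_0X$. The product $b\mathcal R$ has $b$-independent coefficients because of the $1/(b+1)$ and $1/b$ in \eqref{eq:Caccioppoli}, which is the cancellation you yourself noted.

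\emph{The actual source of a spurious $b$.} You put $R^{-2}\lambda\eta^2f^{b+1}$ inside $\mathcal R'$ before multiplying by $Cb$. That term has no compensating $1/b$, so your chain produces it with coefficient of order $bR^{-2}$. No $b$-independent product $\kappa_1\kappa_2$ dominates that for all $b\ge b_*$. As written, your argument therefore cannot yield the stated constants.

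\emph{The repair.} Keep the two cutoff terms coming from $|\nabla\zeta|^2+R^{-2}\zeta^2$ outside the factor $b$. Using $(b+1)^2/b\le4b$ and $X=\int_J\!\!\int\lambda\eta^2f^{b+2}$,
\[
\int_J\!\!\int\bigl(|\nabla\zeta|^2+R^{-2}\zeta^2\bigr)\le 2\int_J\!\!\int\lambda|\nabla\eta|^2f^{b+1}+R^{-2}\int_J\!\!\int\lambda\eta^2f^{b+1}+\frac{4b}{h_-}\bigl(\mathcal R-c_0X\bigr).
\]
Directly from \eqref{eq:Caccioppoli}, $\sup_J\int\zeta^2\le(b+1)(\mathcal R-c_0X)\le2b(\mathcal R-c_0X)$. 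Both factors are therefore bounded by
\[
\mathcal E_b=C_0(1+h_-^{-1})\,b\,(\mathcal R-c_0X)+2\int_J\!\!\int\lambda|\nabla\eta|^2f^{b+1}+R^{-2}\int_J\!\!\int\lambda\eta^2f^{b+1}.
\]
Alternatively, use the first display as your $B$ in $A+B$. Expanding $b\mathcal R$ gives $b$-independent coefficients everywhere except the desired term $-C_0(1+h_-^{-1})c_0\,b\,X$. This yields, for instance, $\kappa_1=2C_0(1+h_-^{-1})$, $\kappa_2=2+4h_+$, $\kappa_3=c_0/4$. Nonnegativity of the right-hand side follows from $\mathcal E_b\ge0$.

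\emph{Why the fallback is not safe.} Absorbing powers of $b$ as ``harmless'' does not work in general. In Lemma~\ref{lem:initial-Lp}, $\kappa_2$ enters $L_0$. A factor $b_0$ there would turn $(p+1)^{2+2/p}/b_0\le C_{\mathscr D}(1+\sqrt KR)$ into a bound of order $(1+\sqrt KR)^2$, which spoils the rate claimed in Theorem~\ref{thm:barrier-main}.
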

	
	\begin{proof}
		Set
		\begin{equation}\label{eq:omega-energy}
			\omega=\lambda^{1/2}\eta f^{(b+1)/2}.
		\end{equation}
		Then $\omega^2=\lambda\eta^2f^{b+1}$ and
		$\omega^{2\chi}=\lambda^\chi\eta^{2\chi}f^{\chi(b+1)}$.  Since $\lambda$ depends
		only on time,
		\begin{align*}
			\nabla\omega
			&=\lambda^{1/2}f^{(b+1)/2}\nabla\eta
			+\frac{b+1}{2}\lambda^{1/2}\eta f^{(b-1)/2}\nabla f,
		\end{align*}
		and hence
		\begin{equation}\label{eq:grad-omega}
			|\nabla\omega|^2
			\leq2\lambda|\nabla\eta|^2f^{b+1}
			+\frac{(b+1)^2}{2}\lambda\eta^2f^{b-1}|\nabla f|^2.
		\end{equation}
		Introduce
		\begin{align}
			\mathcal R_b
			&=2\int_J\!\!\int
			\left(\frac{|\lambda'|}{b+1}\eta^2
			+\frac{4h_+}{b}\lambda|\nabla\eta|^2\right)f^{b+1},
			\label{eq:Rb}\\
			\mathcal I_b
			&=\int_J\!\!\int\lambda\eta^2f^{b+2}.
			\label{eq:Ib}
		\end{align}
		Dropping only nonnegative terms from \eqref{eq:Caccioppoli} gives
		\begin{align}
			\sup_{t\in J}\int \omega^2\dd\mu
			&\leq(b+1)(\mathcal R_b-c_0\mathcal I_b),
			\label{eq:sup-h-bound}\\
			\int_J\!\!\int\lambda\eta^2f^{b-1}|\nabla f|^2
			&\leq\frac{2}{b h_-}(\mathcal R_b-c_0\mathcal I_b).
			\label{eq:grad-f-bound}
		\end{align}
		In particular, $\mathcal R_b-c_0\mathcal I_b\geq0$.
		
		By \eqref{eq:grad-omega} and \eqref{eq:grad-f-bound},
		\begin{align}
			\int_J\!\!\int\bigl(|\nabla\omega|^2+R^{-2}\omega^2\bigr)
			\leq{}&2\int_J\!\!\int\lambda|\nabla\eta|^2f^{b+1}
			+R^{-2}\int_J\!\!\int\lambda\eta^2f^{b+1}\notag\\
			&+\frac{(b+1)^2}{b h_-}(\mathcal R_b-c_0\mathcal I_b).
			\label{eq:space-energy-bound}
		\end{align}
		Because $b\geq1$, one has $b+1\leq2b$ and
		$(b+1)^2/b\leq4b$.  Fix a dimension-dependent constant $C_0=C_0(n)\geq1$
		large enough to dominate these numerical coefficients.  Then both the
		quantity in \eqref{eq:sup-h-bound} and the right-hand side of
		\eqref{eq:space-energy-bound} are bounded by
		\begin{align}
			\mathcal E_b
			={}&C_0(1+h_-^{-1})b(\mathcal R_b-c_0\mathcal I_b)\notag\\
			&+2\int_J\!\!\int\lambda|\nabla\eta|^2f^{b+1}
			+R^{-2}\int_J\!\!\int\lambda\eta^2f^{b+1}.
			\label{eq:Eb-definition}
		\end{align}
		Expanding $\mathcal R_b$ in \eqref{eq:Eb-definition} shows explicitly
		that
		\begin{align*}
			\mathcal E_b
			\leq{}&
			2C_0(1+h_-^{-1})\int_J\!\!\int|\lambda'|\eta^2f^{b+1}\\
			&+\bigl(8C_0(1+h_-^{-1})h_++2\bigr)
			\int_J\!\!\int\lambda|\nabla\eta|^2f^{b+1}\\
			&+R^{-2}\int_J\!\!\int\lambda\eta^2f^{b+1}
			-C_0(1+h_-^{-1})c_0b\,\mathcal I_b.
		\end{align*}
		Thus, for example, one may choose
		\begin{equation}\label{eq:kappa-choice}
			\kappa_1=2C_0(1+h_-^{-1}),\qquad
			\kappa_2=2+4h_+,\qquad
			\kappa_3=\frac{c_0}{4}.
		\end{equation}
		With this choice, every positive coefficient on the right is large
		enough, whereas
		$\kappa_1\kappa_3\leq C_0(1+h_-^{-1})c_0$; hence the negative term is
		no more negative than the one already present in $\mathcal E_b$.
		Consequently,
		\begin{align}
			\mathcal E_b
			\leq{}&\kappa_1\int_J\!\!\int
			\Bigl[|\lambda'|\eta^2
			+\kappa_2\lambda\bigl(|\nabla\eta|^2+R^{-2}\eta^2\bigr)\Bigr]f^{b+1}\notag\\
			&-\kappa_1\kappa_3b\int_J\!\!\int\lambda\eta^2f^{b+2}.
			\label{eq:Eb-structural}
		\end{align}
		The full right-hand side of \eqref{eq:Eb-structural} is nonnegative
		because it bounds $\mathcal E_b\geq0$.
		
		Apply the parabolic Sobolev inequality \eqref{eq:parabolic-Sobolev} to $\omega$,
		using the same approximation procedure if necessary.
		The two energy factors there are both bounded by $\mathcal E_b$, and
		\[
		\frac{2}{n\chi}+\frac1\chi=1.
		\]
		Therefore
		\begin{align*}
			\left(\int_J\!\!\int \omega^{2\chi}\right)^{1/\chi}
			&\leq\mathcal S_R^{1/\chi}
			\mathcal E_b^{2/(n\chi)}\mathcal E_b^{1/\chi}
			=\mathcal S_R^{1/\chi}\mathcal E_b.
		\end{align*}
		Substituting \eqref{eq:Eb-structural} and the expression for $\omega^{2\chi}$
		proves \eqref{eq:master}.
	\end{proof}
	
	\subsection{Initial higher integrability}
	
	\begin{lemma}\label{lem:initial-Lp}
		Let $0<\sigma<\sigma'<1$ and $R\leq\rho\leq3R/2$.  Choose
		$\beta_0=\beta_0(\mathscr D)>0$ sufficiently large, set
		\begin{equation}\label{eq:iteration-exponents-initial}
			b_0:=\beta_0(1+\sqrt K R),\qquad
			\chi:=1+\frac2n,\qquad b_1:=\chi(b_0+1),
		\end{equation}
		and assume $b_0\geq b_*$.  Then
		\begin{align}
			\|f\|_{L^{b_1}(I_\sigma\times B_\rho)}
			\leq{}&
			\frac{C_{\mathscr D}}{(\sigma'-\sigma)^3}
			\frac{1+\sqrt K R}{R^2}
			\bigl(R^2\vol(B_{2R})\bigr)^{1/b_1}.
			\label{eq:initial-Lp}
		\end{align}
	\end{lemma}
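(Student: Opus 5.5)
The proof will apply the master inequality of Lemma~\ref{lem:master} once, with $b=b_0$, on a pair of nested cylinders, and then use the strong absorption term $-\kappa_1\kappa_3 b_0\iint\lambda\eta^2 f^{b_0+2}$ to control the lower-order integral of $f^{b_0+1}$. This is the Huang--Shen device: because $b_0$ grows with $\beta_0$, the coercive $f^2$ term in Lemma~\ref{lem:pointwise} converts the right-hand side into a pure data term.

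\textbf{Cutoffs.} We choose cutoffs adapted to $B_\rho\times I_\sigma\subset B_{\rho'}\times I_{\sigma'}$, with $\rho'$ between $\rho$ and $2R$ (say $\rho'=\rho+(\sigma'-\sigma)R/2\le 2R$):
\begin{itemize}
\item $\eta=1$ on $B_\rho$, supported in $B_{\rho'}$, with $|\nabla\eta|\le C/((\sigma'-\sigma)R)$;
\item $\lambda=1$ on $I_\sigma$, vanishing at $s-\sigma'R^2$, with $|\lambda'|\le C/((\sigma'-\sigma)R^2)$.
\end{itemize}
To keep the $\eta$-powers consistent we replace $\eta,\lambda$ by suitable powers. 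Then every bracket weight in \eqref{eq:master} is bounded by
\[
W:=\frac{C}{(\sigma'-\sigma)^2R^2}\,\tilde\lambda\tilde\eta^2,
\]
where $\tilde\lambda\tilde\eta^2$ is a slightly weaker cutoff dominating $\lambda\eta^2$.

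\textbf{Absorption.} This is the key step. Writing $f^{b_0+1}=f^{b_0+1}\cdot 1$ and applying Young's inequality with exponents $\tfrac{b_0+2}{b_0+1}$ and $b_0+2$ gives
\[
W f^{b_0+1}\le \kappa_1\kappa_3 b_0\,\lambda\eta^2 f^{b_0+2}
+\Bigl(\frac{W}{b_0}\Bigr)^{b_0+2}(\lambda\eta^2)^{-(b_0+1)}\cdot\text{const}.
\]
The cutoff powers have to be arranged, for instance by taking $\eta^{b_0+2}$ as the spatial cutoff, so that the negative power is harmless. The first term cancels against the negative term of \eqref{eq:master}, leaving
\[
\Bigl(\iint_{B_\rho\times I_\sigma} f^{b_1}\Bigr)^{1/\chi}
\le \mathcal S_R^{1/\chi}\,C^{b_0}\Bigl(\frac{1}{(\sigma'-\sigma)^2R^2}\Bigr)^{b_0+2}\Bigl(\frac{1}{b_0}\Bigr)^{b_0+1}R^2\vol(B_{2R}).
\]
We also need the ratio $|\lambda'|/\lambda$, $|\nabla\eta|^2/\eta^2$ to be bounded in the weighted sense. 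This is handled by the cutoff powers, and it is where a third power of $(\sigma'-\sigma)^{-1}$ can enter.

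\textbf{Taking the root.} Next we take the $(b_0+1)$-th root. Then $\mathcal S_R^{1/(\chi(b_0+1))}$ contains $\exp\bigl(C_S(1+\sqrt KR)/(b_0+1)\bigr)$, which is bounded by a constant precisely because $b_0=\beta_0(1+\sqrt KR)$. Likewise $(R^2\vol(B_{2R})^{-2/n})^{1/(\chi(b_0+1))}$, combined with the volume factor, yields $(R^2\vol B_{2R})^{1/b_1}$ up to bounded factors. The remaining factor
\[
\Bigl(\frac{b_0^{-1}}{(\sigma'-\sigma)^2R^2}\Bigr)^{(b_0+2)/(b_0+1)}\cdot b_0
\]
gives
\[
\frac{C\,b_0}{(\sigma'-\sigma)^{3}R^2}\sim C\beta_0\,\frac{1+\sqrt KR}{(\sigma'-\sigma)^3R^2},
\]
after bounding $(\sigma'-\sigma)^{-2(b_0+2)/(b_0+1)}\le(\sigma'-\sigma)^{-3}$. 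All constants depend only on $\mathscr D$ through $\kappa_i$, $c_0$, $h_\pm$, and $\beta_0$.

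\textbf{Main obstacle.} The main obstacle is making the Young-inequality absorption compatible with the cutoff weights. The negative term carries $\lambda\eta^2$, while the positive terms carry $|\lambda'|\eta^2$ and $\lambda|\nabla\eta|^2$, which are not pointwise dominated by $\lambda\eta^2$. I would first try to resolve this by using $\lambda=\ell^{b_0+2}$ and $\eta=\zeta^{b_0+2}$, so that
\[
|\lambda'|\le (b_0+2)\,|\ell'|\,\lambda^{(b_0+1)/(b_0+2)},
\]
which is exactly the homogeneity Young's inequality requires. The extra factors of $b_0$ are absorbed by the $b_0$ in front of the good term.
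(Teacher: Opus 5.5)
Your proposal follows essentially the same route as the paper. The paper makes one application of Lemma~\ref{lem:master} with $b=b_0=p-1$, raises both cutoffs to the power $p+1=b_0+2$ so that every positive term is bounded by $L_0(\tilde\lambda\tilde\eta^2f)^p$ while the good term is exactly $(\tilde\lambda\tilde\eta^2f)^{p+1}$, uses the pointwise bound $L_0x^p-\kappa_3b_0x^{p+1}\le L_0^{p+1}(\kappa_3b_0)^{-p}$ (your Young step), and takes a $p$-th root, with $b_0=\beta_0(1+\sqrt KR)$ taming the exponential factor in $\mathcal S_R$.

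One bookkeeping fix: differentiating the powered cutoffs puts a factor $(b_0+2)^2$ into $L_0\sim(b_0+2)^2/((\sigma'-\sigma)^2R^2)$. Your intermediate display drops this (it cannot be hidden in $C^{b_0}$), and it is only partially absorbed by the good term, since $L_0^{1+1/p}/b_0\sim b_0$. That leftover $b_0$ is exactly what produces the factor $1+\sqrt KR$ in \eqref{eq:initial-Lp}, as your final line correctly records.
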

	
	\begin{proof}
		Put
		\begin{equation}\label{eq:p-definition}
			p=b_0+1,
			\qquad b_1=\chi p.
		\end{equation}
		The choice of $\beta_0$ ensures that $b_0=p-1\geq b_*$.  In
		Lemma~\ref{lem:master}, take $J=(s-\sigma'R^2,s]$.  Choose a smooth
		time cutoff $\widetilde\lambda$ satisfying
		\[
		0\leq\widetilde\lambda\leq1,
		\qquad \widetilde\lambda=0
		\quad\hbox{on }(-\infty,s-\sigma'R^2],
		\qquad \widetilde\lambda=1\quad\hbox{on }I_\sigma,
		\]
		and
		\begin{equation}\label{eq:initial-time-cutoff}
			|\widetilde\lambda'|
			\leq\frac{C}{(\sigma'-\sigma)R^2}.
		\end{equation}
		Choose $\widetilde\eta\in C_0^\infty(B_{2R})$ with
		$0\leq\widetilde\eta\leq1$, $\widetilde\eta=1$ on $B_\rho$, and
		$|\nabla\widetilde\eta|\leq C/R$.  The cutoffs may be chosen flat at
		the boundary of their zero sets, so that the positive real powers below
		remain smooth.  In \eqref{eq:master} take
		\begin{equation}\label{eq:powered-cutoffs}
			\lambda=\widetilde\lambda^{p+1},
			\qquad \eta=\widetilde\eta^{p+1},
			\qquad W=\widetilde\lambda\widetilde\eta^2,
			\qquad b=b_0=p-1.
		\end{equation}
		
		The purpose of the powers in \eqref{eq:powered-cutoffs} is that all terms on
		the right of \eqref{eq:master} acquire the same basic weight.  Indeed,
		\begin{align*}
			|\lambda'|\eta^2f^p
			&=(p+1)\widetilde\lambda^p|\widetilde\lambda'|
			\widetilde\eta^{2p+2}f^p
			\leq(p+1)|\widetilde\lambda'|(Wf)^p,\\
			\lambda|\nabla\eta|^2f^p
			&=(p+1)^2\widetilde\lambda^{p+1}\widetilde\eta^{2p}
			|\nabla\widetilde\eta|^2f^p
			\leq(p+1)^2|\nabla\widetilde\eta|^2(Wf)^p,\\
			R^{-2}\lambda\eta^2f^p
			&\leq R^{-2}(Wf)^p,
		\end{align*}
		while the favorable term is exactly
		\[
		\lambda\eta^2f^{p+1}=(Wf)^{p+1}.
		\]
		Since $0<\sigma'-\sigma<1$, all positive cutoff terms are bounded by
		$L_0(Wf)^p$, where
		\begin{equation}\label{eq:L0}
			L_0=\frac{C(p+1)^2(1+\kappa_2)}{(\sigma'-\sigma)^2R^2}.
		\end{equation}
		Thus the integrand on the right-hand side of \eqref{eq:master} is bounded by
		\[
		L_0(Wf)^p-\kappa_3b_0(Wf)^{p+1}.
		\]
		For every $x\geq0$,
		\begin{equation}\label{eq:young-polynomial}
			L_0x^p-\kappa_3b_0x^{p+1}
			\leq L_0^{p+1}(\kappa_3b_0)^{-p}.
		\end{equation}
		For completeness, the maximum of the left-hand side occurs at
		$x=pL_0/[(p+1)\kappa_3b_0]$, and its value is
		\[
		\frac{p^p}{(p+1)^{p+1}}L_0^{p+1}(\kappa_3b_0)^{-p},
		\]
		which is bounded by the right-hand side of \eqref{eq:young-polynomial}.
		
		The support of $W$ is contained in a time interval of length at most $R^2$
		and in $B_{2R}$.  Hence
		\[
		|\supp W|\leq R^2\vol(B_{2R}).
		\]
		Applying \eqref{eq:master}, using \eqref{eq:young-polynomial}, and recalling
		that the cutoffs equal one on $I_\sigma\times B_\rho$, we find
		\begin{align*}
			\left(\int_{I_\sigma\times B_\rho}f^{\chi p}\right)^{1/\chi}
			\leq{}&\mathcal S_R^{1/\chi}\kappa_1
			L_0^{p+1}(\kappa_3b_0)^{-p}
			R^2\vol(B_{2R}).
		\end{align*}
		Taking the $p$-th root gives
		\begin{align}
			\|f\|_{L^{b_1}(I_\sigma\times B_\rho)}
			\leq{}&\mathcal S_R^{1/b_1}\kappa_1^{1/p}
			L_0^{1+1/p}(\kappa_3b_0)^{-1}
			\bigl(R^2\vol(B_{2R})\bigr)^{1/p}.
			\label{eq:initial-before-simplification}
		\end{align}
		
		We finally simplify the factors in \eqref{eq:initial-before-simplification}.
		First, since $p\geq2$,
		\[
		(\sigma'-\sigma)^{-2(1+1/p)}
		\leq(\sigma'-\sigma)^{-3}.
		\]
		Next, from \eqref{eq:S_R} and $b_1=\chi p$,
		\begin{align*}
			&\mathcal S_R^{1/b_1}
			R^{-2-2/p}\bigl(R^2\vol(B_{2R})\bigr)^{1/p}\\
			&\qquad\leq C_nR^{-2}
			\bigl(R^2\vol(B_{2R})\bigr)^{1/b_1}.
		\end{align*}
		Here the exponential factor is uniformly bounded because
		$b_1=\chi[\beta_0(1+\sqrt K R)+1]$.  Finally,
		\begin{equation}\label{eq:p-factor-bound}
			\frac{(p+1)^{2+2/p}}{b_0}
			\leq C_{\mathscr D}(1+\sqrt K R).
		\end{equation}
		The constant $C_{\mathscr D}$ is essential here because $p$ contains the
		structural parameter $\beta_0$.  Since the remaining factors
		$\kappa_1,\kappa_2,\kappa_3$ also depend only on $\mathscr D$, they may be
		absorbed into $C_{\mathscr D}$.  Substitution into
		\eqref{eq:initial-before-simplification} yields \eqref{eq:initial-Lp}.
	\end{proof}
	
	\subsection{Nash--Moser iteration}
	
	\begin{lemma}\label{lem:one-step}
		Fix $0<\tau<\tau'<1$ and put $\delta=\tau'-\tau$.  Define
		\begin{equation}\label{eq:bk}
			b_{k+1}=\chi b_k,
			\qquad b_1=\chi(b_0+1),
		\end{equation}
		and
		\begin{equation}\label{eq:tau-R-k}
			\tau_k=\tau+\frac{\delta}{2^{k-1}},
			\qquad R_k=R+\frac{\delta R}{2^k},
			\qquad Q_k=I_{\tau_k}\times B_{R_k}.
		\end{equation}
		Then
		\begin{equation}\label{eq:one-step}
			\|f\|_{L^{b_{k+1}}(Q_{k+1})}
			\leq
			\mathcal S_R^{1/b_{k+1}}
			\left(\frac{D_{\mathscr D}4^k}{\delta^2R^2}\right)^{1/b_k}
			\|f\|_{L^{b_k}(Q_k)},
		\end{equation}
		where
		\begin{equation}\label{eq:DF}
			D_{\mathscr D}=C_n\kappa_1(1+\kappa_2).
		\end{equation}
	\end{lemma}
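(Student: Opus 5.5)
The plan is to apply Lemma~\ref{lem:master} once, with $b+1=b_k$, and a standard pair of cutoffs adapted to the nested cylinders $Q_{k+1}\subset Q_k$. Take $J=(s-\tau_kR^2,s]$, and choose a time cutoff $\lambda$ with $\lambda=0$ near $s-\tau_kR^2$ and $\lambda=1$ on $I_{\tau_{k+1}}$. Since $\tau_k-\tau_{k+1}=\delta/2^{k}$, we may take $|\lambda'|\le C2^k/(\delta R^2)$. For the spatial cutoff, choose $\eta\in C_0^\infty(B_{R_k})$ with $\eta=1$ on $B_{R_{k+1}}$ and $|\nabla\eta|\le C2^{k+1}/(\delta R)$, so that $|\nabla\eta|^2\le C4^k/(\delta^2R^2)$. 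We also need $b=b_k-1\ge b_*$. This holds for every $k\ge1$ because $b_k\ge b_1=\chi(b_0+1)>b_0+1$ and $b_0\ge b_*$ by the standing hypothesis of Lemma~\ref{lem:initial-Lp}.

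With these choices, the negative term in \eqref{eq:master} is simply dropped; it is nonpositive, and Lemma~\ref{lem:master} guarantees the right-hand side stays nonnegative. Since $0<\delta<1$ and $2^k\le4^k$, each remaining coefficient satisfies
\[
|\lambda'|\eta^2+\kappa_2\lambda\bigl(|\nabla\eta|^2+R^{-2}\eta^2\bigr)\le C_n(1+\kappa_2)\frac{4^k}{\delta^2R^2}
\]
on $J\times B_{R_k}\subset Q_k$. The left side of \eqref{eq:master} dominates $\int_{Q_{k+1}}f^{\chi b_k}=\int_{Q_{k+1}}f^{b_{k+1}}$, because $\lambda=\eta=1$ there. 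This gives
\[
\Bigl(\int_{Q_{k+1}}f^{b_{k+1}}\Bigr)^{1/\chi}\le \mathcal S_R^{1/\chi}\,\frac{D_{\mathscr D}4^k}{\delta^2R^2}\int_{Q_k}f^{b_k},
\]
with $D_{\mathscr D}=C_n\kappa_1(1+\kappa_2)$ as in \eqref{eq:DF}.

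Raising both sides to the power $1/b_k$ and using $\chi b_k=b_{k+1}$ produces exactly the factor $\mathcal S_R^{1/(\chi b_k)}=\mathcal S_R^{1/b_{k+1}}$ together with $(D_{\mathscr D}4^k/(\delta^2R^2))^{1/b_k}$, which is \eqref{eq:one-step}.

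The only delicate point is bookkeeping rather than analysis. One must check that the cylinder radii $R_k\le 3R/2<2R$ keep $\eta$ supported in $B_{2R}$ (this holds since $\delta<1$), and that the time interval $J$ lies inside $(s-R^2,s]$ (this holds since $\tau_k\le\tau+2\delta<\ldots$, which requires $\tau_k<1$). If $\tau_1=\tau+\delta=\tau'$, this is fine. For the approximation needed to justify the test function, we appeal to the procedure already described in Lemmas~\ref{lem:Caccioppoli} and~\ref{lem:master}.
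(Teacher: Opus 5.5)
Your proposal is correct and follows the paper's proof essentially step for step. It uses the same cutoffs with $|\lambda_k'|\le C2^k/(\delta R^2)$ and $|\nabla\eta_k|^2\le C4^k/(\delta^2R^2)$, applies Lemma~\ref{lem:master} once on $J=I_{\tau_k}$ with $b+1=b_k$ after discarding the negative term, and takes a $b_k$-th root using $b_{k+1}=\chi b_k$; your explicit check that $b_k-1>b_0\ge b_*$ fills in a step the paper only asserts, and the one blemish is the stray bound $\tau_k\le\tau+2\delta$ in your last paragraph, which should simply read $\tau_k\le\tau_1=\tau'<1$, as you note right after.
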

	
	\begin{proof}
		The cylinders are nested and decrease to the desired cylinder:
		$Q_{k+1}\Subset Q_k$ and
		$\bigcap_{k\geq1}Q_k=I_\tau\times B_R$.  Choose a time cutoff $\lambda_k$
		such that
		\[
		0\leq\lambda_k\leq1,
		\qquad \lambda_k=0\quad\hbox{before }s-\tau_kR^2,
		\qquad \lambda_k=1\quad\hbox{on }I_{\tau_{k+1}},
		\]
		and
		\begin{equation}\label{eq:lambda-k-bound}
			|\lambda_k'|
			\leq\frac{C2^k}{\delta R^2}.
		\end{equation}
		Choose $\eta_k\in C_0^\infty(B_{R_k})$ satisfying
		$0\leq\eta_k\leq1$, $\eta_k=1$ on $B_{R_{k+1}}$, and
		\begin{equation}\label{eq:eta-k-bound}
			|\nabla\eta_k|^2
			\leq\frac{C4^k}{\delta^2R^2}.
		\end{equation}
		
		Apply \eqref{eq:master} on $J=I_{\tau_k}$ with $b+1=b_k$, and discard the favorable negative
		term.  Because $b_k-1\geq b_*$ and $0<\delta<1$, the cutoff estimates imply
		\begin{align*}
			&|\lambda_k'|\eta_k^2
			+\kappa_2\lambda_k\bigl(|\nabla\eta_k|^2+R^{-2}\eta_k^2\bigr)\\
			&\qquad\leq C_n(1+\kappa_2)\frac{4^k}{\delta^2R^2}.
		\end{align*}
		Since $\lambda_k=\eta_k=1$ on $Q_{k+1}$ and both cutoffs are supported in
		$Q_k$, \eqref{eq:master} gives
		\begin{align*}
			\left(\int_{Q_{k+1}}f^{\chi b_k}\right)^{1/\chi}
			\leq\mathcal S_R^{1/\chi}
			\frac{D_{\mathscr D}4^k}{\delta^2R^2}\int_{Q_k}f^{b_k}.
		\end{align*}
		Taking the $b_k$-th root and using $b_{k+1}=\chi b_k$ proves
		\eqref{eq:one-step}.
	\end{proof}
	
	\begin{lemma}\label{lem:NashMoser-limit}
		Under the assumptions of Lemma~\ref{lem:one-step},
		\begin{align}
			\|f\|_{L^\infty(I_\tau\times B_R)}
			\leq{}&C_{\mathscr D}
			\delta^{-(n+2)/b_1}
			\bigl(R^2\vol(B_{2R})\bigr)^{-1/b_1}
			\|f\|_{L^{b_1}(Q_1)}.
			\label{eq:NashMoser-limit}
		\end{align}
	\end{lemma}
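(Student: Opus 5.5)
We will iterate the one-step estimate of Lemma~\ref{lem:one-step} from $k=1$ upward and pass to the limit. Since $Q_{k+1}\Subset Q_k$ and $\bigcap_kQ_k=I_\tau\times B_R$, we have
\[
\|f\|_{L^\infty(I_\tau\times B_R)}\le\liminf_{k\to\infty}\|f\|_{L^{b_{k}}(Q_{k})}.
\]
Chaining \eqref{eq:one-step} gives, for every $m\ge1$,
\[
\|f\|_{L^{b_{m+1}}(Q_{m+1})}\le\prod_{k=1}^{m}\mathcal S_R^{1/b_{k+1}}\Bigl(\frac{D_{\mathscr D}}{\delta^2R^2}\Bigr)^{1/b_k}4^{k/b_k}\;\|f\|_{L^{b_1}(Q_1)}.
\]
Throughout we use $b_k=\chi^{k-1}b_1$ and $\chi=1+2/n$.

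The main step is to evaluate the exponent sums. We have
\[
\sum_{k\ge1}\frac1{b_k}=\frac{1}{b_1}\frac{\chi}{\chi-1}=\frac{n+2}{2b_1},
\qquad
\sum_{k\ge1}\frac1{b_{k+1}}=\frac{1}{b_1}\frac{1}{\chi-1}=\frac{n}{2b_1},
\qquad
\sum_{k\ge1}\frac{k}{b_k}=\frac{1}{b_1}\frac{\chi^2}{(\chi-1)^2}<\infty.
\]
These give three contributions.
\begin{itemize}
\item The factor $(\delta^2R^2)^{-1/b_k}$ produces $\delta^{-(n+2)/b_1}R^{-(n+2)/b_1}$; this is exactly the $\delta$-power claimed.
\item The factor $4^{k/b_k}$ gives $4^{\chi^2/((\chi-1)^2b_1)}$. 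Since $b_1\ge\chi$, this is bounded by a constant depending on $n$, hence on $\mathscr D$.
\item $D_{\mathscr D}^{(n+2)/(2b_1)}$ is bounded similarly, since $b_1\ge1$.
\end{itemize}

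Next comes the Sobolev constant. By \eqref{eq:S_R},
\[
\mathcal S_R^{n/(2b_1)}=\Bigl(C_S e^{C_S(1+\sqrt KR)}\Bigr)^{n/(2b_1)}R^{n/b_1}\vol(B_{2R})^{-1/b_1}.
\]
Here we use that $b_1=\chi(b_0+1)$ with $b_0=\beta_0(1+\sqrt KR)$. Then the exponential factor becomes $\exp\bigl(C_Sn(1+\sqrt KR)/(2b_1)\bigr)\le e^{C_Sn/(2\chi\beta_0)}$, which is uniformly bounded. This step is the delicate point: without the choice of $b_0$ proportional to $1+\sqrt KR$ in Lemma~\ref{lem:initial-Lp}, the curvature term would not be absorbed. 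It is the reason the iteration is started at $b_1$ and not at a fixed exponent.

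Finally, we collect the powers of $R$:
\[
R^{-(n+2)/b_1}\cdot R^{n/b_1}=R^{-2/b_1},
\]
and combining with $\vol(B_{2R})^{-1/b_1}$ gives exactly $(R^2\vol(B_{2R}))^{-1/b_1}$. All remaining factors are absorbed into $C_{\mathscr D}$, which yields \eqref{eq:NashMoser-limit}. We will check that no stray $R$ powers remain; in particular the $R^{-2}$ in the cutoff bounds and the $R^2$ inside $\mathcal S_R$ must combine as computed above.
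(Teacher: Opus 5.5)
Your proposal is correct and follows the paper's proof essentially line by line. You chain the one-step estimate and evaluate the three geometric series; your values $\frac{n+2}{2b_1}$, $\frac{n}{2b_1}$ and $\frac{\chi^2}{(\chi-1)^2b_1}=\frac{(n+2)^2}{4b_1}$ agree with the paper's. You then absorb the curvature exponential in $\mathcal S_R^{n/(2b_1)}$ via $b_0=\beta_0(1+\sqrt KR)$, and combine $R^{n/b_1}R^{-(n+2)/b_1}\vol(B_{2R})^{-1/b_1}=\bigl(R^2\vol(B_{2R})\bigr)^{-1/b_1}$, all exactly as the paper does. Your passage to the limit through $\|f\|_{L^\infty(I_\tau\times B_R)}\le\liminf_k\|f\|_{L^{b_k}(Q_k)}$ is, if anything, a slightly more careful version of the paper's ``the left-hand side converges to the essential supremum.''
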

	
	\begin{proof}
		Iterating \eqref{eq:one-step} from $k=1$ to $k=N$ gives
		\begin{align}
			\|f\|_{L^{b_{N+1}}(Q_{N+1})}
			\leq{}&
			\mathcal S_R^{\sum_{k=1}^N b_{k+1}^{-1}}
			\left(\frac{D_{\mathscr D}}{\delta^2R^2}\right)^{\sum_{k=1}^N b_k^{-1}}
			4^{\sum_{k=1}^N k/b_k}
			\|f\|_{L^{b_1}(Q_1)}.
			\label{eq:finite-iteration}
		\end{align}
		Since $b_k=\chi^{k-1}b_1$ and $\chi=(n+2)/n$, the relevant geometric
		series are
		\begin{align}
			\sum_{k=1}^\infty\frac1{b_k}
			&=\frac1{b_1}\frac1{1-\chi^{-1}}
			=\frac{n+2}{2b_1}
			=\frac{n}{2(b_0+1)},\label{eq:series1}\\
			\sum_{k=1}^\infty\frac1{b_{k+1}}
			&=\frac1{\chi b_1}\frac1{1-\chi^{-1}}
			=\frac{n}{2b_1},\label{eq:series2}\\
			\sum_{k=1}^\infty\frac{k}{b_k}
			&=\frac1{b_1}\frac1{(1-\chi^{-1})^2}
			=\frac{(n+2)^2}{4b_1}
			=\frac{n(n+2)}{4(b_0+1)}.\label{eq:series3}
		\end{align}
		Because $b_{N+1}\to\infty$ and $Q_{N+1}\downarrow I_\tau\times B_R$,
		the left-hand side of \eqref{eq:finite-iteration} converges to the essential
		supremum of $f$ on $I_\tau\times B_R$.  Letting $N\to\infty$ gives
		\begin{align}
			\|f\|_{L^\infty(I_\tau\times B_R)}
			\leq{}&\mathcal S_R^{n/(2b_1)}
			D_{\mathscr D}^{n/[2(b_0+1)]}
			\delta^{-(n+2)/b_1}R^{-n/(b_0+1)}\notag\\
			&\times4^{n(n+2)/[4(b_0+1)]}
			\|f\|_{L^{b_1}(Q_1)}.
			\label{eq:limit-before-scaling}
		\end{align}
		Using \eqref{eq:S_R}, the exponential term in
		$\mathcal S_R^{n/(2b_1)}$ is uniformly bounded, since
		$b_1=\chi[\beta_0(1+\sqrt K R)+1]$.  The factor involving $4$ is uniformly bounded and is also
		absorbed into $C_n$.  The remaining scale and volume factors satisfy
		\begin{align*}
			R^{n/b_1-n/(b_0+1)}\vol(B_{2R})^{-1/b_1}
			&=R^{-2/b_1}\vol(B_{2R})^{-1/b_1}\\
			&=\bigl(R^2\vol(B_{2R})\bigr)^{-1/b_1}.
		\end{align*}
		Finally, $D_{\mathscr D}=C_n\kappa_1(1+\kappa_2)$ and all the remaining fixed
		factors depend only on $\mathscr D$.  Absorbing them into
		$C_{\mathscr D}$ gives \eqref{eq:NashMoser-limit}.
	\end{proof}

	\begin{proposition}\label{prop:abstract-NashMoser}
		Let $f\ge0$ satisfy, in distributions on $\Q_{2R}$,
		\[
		\cL f\le2(1+h'(v))\langle\nabla v,\nabla f\rangle
		-c_0f^2-d_0yf,
		\qquad y=\frac{|\nabla v|^2}{h},
		\]
		where $c_0,d_0>0$, $h_-\le h\le h_+$, and
		$\Lambda:=\sup_{\Q_{2R}}|2+h'(v)|<\infty$.  Set
		$\mathscr D=(n,h_-,h_+,c_0,d_0,\Lambda)$.  Then, for every
		$0<\tau<1$,
		\begin{equation}\label{eq:abstract-NashMoser-estimate}
			\|f\|_{L^\infty(I_\tau\times B_R)}
			\le
			\frac{C(n,h_-,h_+,c_0,d_0,\Lambda)}{(1-\tau)^{3+n/2}}
			\frac{1+\sqrt K R}{R^2}.
		\end{equation}
	\end{proposition}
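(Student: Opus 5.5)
The plan is to chain Lemma~\ref{lem:NashMoser-limit} with Lemma~\ref{lem:initial-Lp}. Fix $0<\tau<1$ and set $\tau'=(1+\tau)/2$, so that $\delta=\tau'-\tau=(1-\tau)/2$. Choose $\beta_0=\beta_0(\mathscr D)$ large enough that $b_0=\beta_0(1+\sqrt K R)\ge b_*$. This is possible because $b_*=\max\{1,2\Lambda^2/d_0\}$ depends only on $\mathscr D$. In particular every exponent $b_k-1$ in the iteration is at least $b_*$, so Lemma~\ref{lem:master} applies at each step.

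\textbf{Step 1: the $L^\infty$ bound.} Lemma~\ref{lem:NashMoser-limit} bounds $\|f\|_{L^\infty(I_\tau\times B_R)}$ by
\[
C_{\mathscr D}\,\delta^{-(n+2)/b_1}\bigl(R^2\vol(B_{2R})\bigr)^{-1/b_1}\|f\|_{L^{b_1}(Q_1)}.
\]
Here $Q_1=I_{\tau+\delta}\times B_{R+\delta R/2}=I_{\tau'}\times B_{\rho}$ with $\rho=R(1+\delta/2)\le 3R/2$.

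\textbf{Step 2: the $L^{b_1}$ bound.} Lemma~\ref{lem:initial-Lp} is applied with $\sigma=\tau'$ and $\sigma'=(1+\tau')/2$, so that $\sigma'-\sigma=(1-\tau)/4$, and with this $\rho$. It gives
\[
\|f\|_{L^{b_1}(Q_1)}\le C_{\mathscr D}(1-\tau)^{-3}\,\frac{1+\sqrt KR}{R^2}\,\bigl(R^2\vol(B_{2R})\bigr)^{1/b_1}.
\]

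\textbf{Step 3: combine.} Multiplying the two bounds, the volume factors $\bigl(R^2\vol(B_{2R})\bigr)^{\pm1/b_1}$ cancel exactly. This cancellation is why both lemmas were normalized with the same power. What remains is
\[
C_{\mathscr D}\,(1-\tau)^{-3}\,\delta^{-(n+2)/b_1}\,\frac{1+\sqrt KR}{R^2}.
\]

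\textbf{Step 4: the power of $1-\tau$.} Since $b_1=\chi(b_0+1)\ge \chi=(n+2)/n$, we get $(n+2)/b_1\le n$. This crude bound would only give the exponent $3+n$. To obtain the stated $3+n/2$, enlarge $\beta_0$ so that $b_0\ge1$, hence $b_1\ge 2\chi$ and $(n+2)/b_1\le n/2$. As $\delta<1$, this gives $\delta^{-(n+2)/b_1}\le 2^{n/2}(1-\tau)^{-n/2}$. Multiplying yields $(1-\tau)^{-(3+n/2)}$, and $C$ depends only on $\mathscr D$.

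The main point needing care is uniformity in $K$ and $R$. The constant from Lemma~\ref{lem:NashMoser-limit} contains $\mathcal S_R^{n/(2b_1)}$, whose factor $\exp(C_S(1+\sqrt KR))^{n/(2b_1)}$ is bounded only because $b_1$ grows linearly in $1+\sqrt KR$. The residual powers of $R$ are also absorbed only in the way recorded in the lemmas, so I would double-check these absorptions rather than redo them. The rest is just bookkeeping of the choices $\sigma,\sigma',\tau'$.
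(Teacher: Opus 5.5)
Your proposal is correct and follows the paper's proof essentially verbatim. It uses the same $\tau'=(1+\tau)/2$, chains Lemma~\ref{lem:initial-Lp} into Lemma~\ref{lem:NashMoser-limit} with the same cancellation of the factors $\bigl(R^2\vol(B_{2R})\bigr)^{\pm1/b_1}$, and closes with the same bound $(n+2)/b_1=n/(b_0+1)\le n/2$.

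The only difference is your choice $\sigma'=(1+\tau')/2$ in place of the paper's $\sigma'=1$. Your choice actually respects the hypothesis $\sigma'<1$ of Lemma~\ref{lem:initial-Lp} and costs only a numerical factor. Also, $b_0\ge b_*\ge1$ already holds, so Step~4 needs no further enlargement of $\beta_0$.
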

	
	\begin{proof}
		Choose $\beta_0=\beta_0(\mathscr D)\ge1$ so large that
		\[
		\beta_0\ge \max\left\{1,\frac{2\Lambda^2}{d_0}\right\},
		\]
		and set
		\begin{equation}\label{eq:iteration-exponents-main-proof}
			b_0=\beta_0(1+\sqrt K R),
			\qquad \chi=1+\frac2n,
			\qquad b_1=\chi(b_0+1).
		\end{equation}
		Then $b_0\ge b_*$, so all preceding energy and iteration estimates apply.
		
		Fix $0<\tau<1$ and put
		\[
		\tau'=\frac{1+\tau}{2},
		\qquad \delta=\tau'-\tau=\frac{1-\tau}{2}.
		\]
		The first cylinder in the infinite iteration is
		$Q_1=I_{\tau'}\times B_{R_1}$ with $R\le R_1\le3R/2$.
		Lemma~\ref{lem:initial-Lp}, used with $\sigma=\tau'$, $\sigma'=1$, and
		$\rho=R_1$, gives
		\[
		\|f\|_{L^{b_1}(Q_1)}
		\le
		\frac{C_{\mathscr D}}{(1-\tau')^3}
		\frac{1+\sqrt K R}{R^2}
		\bigl(R^2\vol(B_{2R})\bigr)^{1/b_1}.
		\]
		Inserting this estimate into Lemma~\ref{lem:NashMoser-limit} cancels the
		volume factors and yields
		\begin{equation}\label{eq:sharp-in-proof}
			\|f\|_{L^\infty(I_\tau\times B_R)}
			\le
			\frac{C_{\mathscr D}}{(1-\tau)^{3+(n+2)/b_1}}
			\frac{1+\sqrt K R}{R^2}.
		\end{equation}
		Since
		\[
		\frac{n+2}{b_1}=\frac{n}{b_0+1}\le\frac n2
		\]
		because $b_0\ge1$, \eqref{eq:abstract-NashMoser-estimate} follows.
	\end{proof}
	
	\begin{proof}[\bf Proof of Theorem~\ref{thm:barrier-main}]
		Lemma~\ref{lem:pointwise} gives the differential inequality required by
		Proposition~\ref{prop:abstract-NashMoser} with
		\[
		c_0=\frac{2}{n\alpha^2},\qquad
		d_0=\frac{4(\alpha-1)}{n\alpha^2},\qquad
		\Lambda\le2+\Theta.
		\]
		The estimate \eqref{eq:barrier-main-estimate} follows immediately.
	\end{proof}

	\begin{remark}[The parabolic time-layer exponent]\label{rem:time-exponent}
		The initial higher-integrability estimate contributes the factor
		$(1-\tau')^{-3}$.  At the $k$-th exponent-improvement step, the time and
		space cutoffs together contribute $\delta^{-2/b_k}$.  Consequently, the
		infinite iteration contributes
		\[
		\delta^{-2\sum_{k\geq1}b_k^{-1}}
		=\delta^{-(n+2)/b_1}.
		\]
		Thus the numerator $n+2$ is the parabolic dimension; replacing it by $n$
		at this stage would omit the time-cutoff contribution.
	\end{remark}
	
	\begin{remark}[Dimension two]\label{rem:dimension-two}
		The proof above is written for $n>2$ because it uses the critical local
		Sobolev exponent $2n/(n-2)$.  A two-dimensional variant would require a
		separate local Sobolev inequality with a chosen finite exponent (or an
		effective Sobolev dimension $N>2$) and a corresponding repetition of the
		iteration.  The Bochner calculation itself would still use the true
		geometric dimension $n=2$.  We do not pursue that variant here.
	\end{remark}

	\subsection{Proofs of the corollaries}\label{sec:consequences}
	
	For the choice \eqref{eq:explicit-barrier},
	\[
	2Kh_+-\frac{4(\alpha-1)}{n\alpha^2}\phi_K=Kh_+,
	\]
	and the two quadratic terms in \eqref{eq:barrier-Riccati-main} are
	equal.  Hence $\phi_K$ is admissible in Theorem~\ref{thm:barrier-main}.
	
	\begin{proof}[Proof of Corollary~\ref{cor:Harnack-additive}]
		The differential estimate gives
		\[
		v_t\ge\frac1\alpha|\nabla v|^2
		-\frac{h(v)}\alpha(\phi_K+E_{R,\tau}).
		\]
		Let $\gamma:[t_1,t_2]\to B_R(x_0)$ be a constant-speed minimizing
		geodesic from $x_1$ to $x_2$.  Along $\gamma$,
		\begin{align*}
			\frac{\dd}{\dd t}v(\gamma(t),t)
			&\ge\frac1\alpha|\nabla v|^2-|\nabla v|\,|\dot\gamma|
			-\frac{h_+}\alpha(\phi_K+E_{R,\tau})\\
			&\ge-\frac\alpha4|\dot\gamma|^2
			-\frac{h_+}\alpha(\phi_K+E_{R,\tau}).
		\end{align*}
		Integration gives \eqref{eq:additive-Harnack}.
	\end{proof}
	
	\begin{proof}[\bf Proof of Corollary~\ref{cor:global}]
		Fix $(x,t)\in M\times(-\infty,T)$, take $s=t$ and $\tau=1/2$, and
		apply Theorem~\ref{thm:barrier-main} with $\phi=\phi_K$ on
		$B_{2R}(x)\times(t-R^2,t+\varepsilon)$, where $t+\varepsilon<T$.
		The constants are uniform in $R$, so
		\[
		H_\alpha(x,t)-\phi_K
		\le C\frac{1+\sqrt K R}{R^2}.
		\]
		Letting $R\to\infty$ proves \eqref{eq:global-Harnack-estimate}.
	\end{proof}
	
	\begin{proof}[\bf Proof of Corollary~\ref{cor:Liouville}]
		Corollary~\ref{cor:global} with $K=0$ gives $H_\alpha\le0$.  On the
		other hand, $v_t=(F'(u)/u)u_t\le0$ and $h=F'(u)>0$, so
		$z=v_t/h\le0$.  Consequently
		\[
		0\ge H_\alpha=y-\alpha z\ge y\ge0.
		\]
		Thus $y=0$ and $z=0$, whence $\nabla v=0$ and $v_t=0$.  Since the
		pressure transformation is strictly increasing and $M$ is connected, $u$
		is constant on space--time.
	\end{proof}
	
	\section{Power and non-power filtration laws}\label{sec:special-laws}
	
	We first specialize the general estimate to the two power-law branches and
	then describe a simple non-power class.
	
	\subsection{The porous medium equation}\label{subsec:PME}
	Let $F(u)=u^m$ with $m>1$.  Then
	\begin{equation}\label{eq:PME-struct-special}
		h=m u^{m-1},\qquad
		\theta=\frac{uF''}{F'}=m-1,\qquad
		\rho=u\theta'=0.
	\end{equation}
	Thus \eqref{eq:theta-monotone} is automatic.  If
	\begin{equation}\label{eq:power-range-special}
		0<u_-\le u\le u_+<\infty
	\end{equation}
	on the cylinder, then \eqref{eq:uniform-parabolic-structure} holds with
	$h_-=m u_-^{m-1}$ and $h_+=m u_+^{m-1}$.
	Theorem~\ref{thm:barrier-main}, with $\phi=\phi_K$, therefore applies for
	every $\alpha>1$.
	
	The additive constant in \eqref{eq:pressure-intro} is now chosen so that the pressure has its classical normalization.  Writing
	\[
	V_m=\frac{m}{m-1}u^{m-1}>0,
	\qquad h=(m-1)V_m,
	\]
	we obtain
	\begin{equation}\label{eq:PME-final-est}
		\sup_{B_R\times I_\tau}
		\left(
		\frac{|\nabla V_m|^2}{V_m}
		-\alpha\frac{(V_m)_t}{V_m}
		-(m-1)\phi_K
		\right)
		\le
		\frac{C_{n,m,\alpha,u_-,u_+}}{(1-\tau)^{3+n/2}}
		\frac{1+\sqrt K R}{R^2}.
	\end{equation}
	This is the PME specialization of the general filtration estimate.  It is an
	interior uniformly parabolic consequence and is not intended to replace the
	free-boundary estimates available for the exact porous-medium equation.
	
	\subsection{Recovery of the fast diffusion estimate}\label{subsec:FDE}
	Let $F(u)=u^m$ with
	\begin{equation}\label{eq:FDE-range-special}
		1-\frac2n<m<1
	\end{equation}
	and use the classical negative pressure
	\[
	v=\frac{m}{m-1}u^{m-1}<0.
	\]
	Put
	\[
	\cL=\partial_t-(m-1)v\Delta,
	\qquad y=\frac{|\nabla v|^2}{v},
	\qquad z=\frac{v_t}{v},
	\qquad H_\alpha=y-\alpha z,
	\]
	where $0<\alpha<1$.  Because the logarithmic slope
	$uF''/F'=m-1$ is constant, no derivatives of a general constitutive law
	occur.  The direct computation used by Huang--Shen gives
	\begin{align}
		\cL(-H_\alpha)
		={}&2m\langle\nabla v,\nabla(-H_\alpha)\rangle
		-2(1-m)|\nabla^2v|^2
		-2(1-m)\Ric(\nabla v,\nabla v)\notag\\
		&-(1-\alpha)z^2+(y-z)^2.
		\label{eq:FDE-direct-evolution}
	\end{align}
	This identity provides a consistency check rather than a new FDE theorem.
	Indeed, the FDE branch is obtained by estimating $\cL(-H_\alpha)$ directly,
	so no derivatives of general constitutive coefficients are involved.  Applying
	the Cauchy--Schwarz inequality, the Ricci lower bound, and the FDE barrier
	condition in \cite{HuangShen2025} to \eqref{eq:FDE-direct-evolution}, followed
	by their positive-part iteration, recovers the Huang--Shen estimate.
	
	Indeed, with the positive pressure
	\begin{equation}\label{eq:FDE-positive-pressure}
		V=-v=\frac{m}{1-m}u^{m-1}>0,
	\end{equation}
	one has
	\begin{equation}\label{eq:FDE-signed-quantity}
		-H_\alpha=\frac{|\nabla V|^2}{V}
		+\alpha\frac{V_t}{V}.
	\end{equation}
	Thus the quantity estimated from above is exactly the positive-pressure
	FDE expression in \eqref{eq:FDE-signed-quantity}.  We do not restate the
	barrier assumptions or the resulting estimate, since they are precisely those
	of the FDE theorem in \cite{HuangShen2025}.  The lower threshold
	$m>1-2/n$ is the sign condition that makes this scalar calculation coercive.
	
	\subsection{Non-power filtration laws}\label{subsec:nonpower}
	The structural assumptions have a concise logarithmic interpretation.  If
	$s=\log u$ and $\sigma(s)=\log F'(e^s)$, then
	\begin{equation}\label{eq:sigma-derivatives}
		\sigma'(s)=\theta(e^s),\qquad
		\sigma''(s)=e^s\theta'(e^s).
	\end{equation}
	Hence Theorem~\ref{thm:barrier-main} applies whenever $\sigma$ is
	nondecreasing and concave with bounded slope, together with the local two-sided
	bounds for $F'$.  Equivalently, for any nonnegative, bounded, nonincreasing
	$\vartheta\in C^1((0,\infty))$, the prescription
	\begin{equation}\label{eq:nonpower-construction}
		F'(u)=c\exp\!\left(\int_{u_0}^u\frac{\vartheta(r)}r\,\dd r\right),
		\qquad c>0,
	\end{equation}
	gives $uF''/F'=\vartheta$ and therefore an admissible law.
	
	A useful genuinely non-power family is
	\begin{equation}\label{eq:interpolating-Fprime}
		F'_{\beta,\delta,\gamma}(u)
		=c\,u^\beta(1+u^\gamma)^{-\delta/\gamma},
		\qquad
		\beta,\gamma>0,\quad 0<\delta\le\beta.
	\end{equation}
	For this family,
	\begin{equation}\label{eq:interpolating-theta}
		\theta(u)=\beta-\delta\frac{u^\gamma}{1+u^\gamma},
		\qquad
		\theta'(u)=-\delta\gamma\frac{u^{\gamma-1}}{(1+u^\gamma)^2}\le0.
	\end{equation}
	Thus $\beta-\delta\le\theta(u)\le\beta$, and
	Theorem~\ref{thm:barrier-main} applies on every compact positive solution
	range.  Moreover,
	\[
	F'_{\beta,\delta,\gamma}(u)\sim c u^\beta\quad(u\downarrow0),
	\qquad
	F'_{\beta,\delta,\gamma}(u)\sim c u^{\beta-\delta}\quad(u\to\infty).
	\]
	Thus the effective diffusivity crosses over between two power regimes; when
	$\delta=\beta$ it becomes asymptotically constant at high density.  Such a
	constitutive law models saturation or a change of transport mechanism between
	low- and high-density states.  Geometrically, the same crossover is the
	concavity of the pressure diffusivity $h$, which supplies the favorable Bochner
	sign.  The simplest member is
	\begin{equation}\label{eq:explicit-nonpower}
		F'(u)=\frac{u}{1+u},
		\qquad
		F(u)=u-\log(1+u)+C,
	\end{equation}
	for which
	\[
	\theta(u)=\frac1{1+u},\qquad
	\theta'(u)=-\frac1{(1+u)^2}<0.
	\]

	Finally, Yau's condition in Theorem~\ref{thm:Yau-intro} is
	\begin{equation}\label{eq:Yau-theta-comparison}
		\frac2nF'(u)+uF''(u)\ge0
		\quad\Longleftrightarrow\quad
		\theta(u)\ge-\frac2n.
	\end{equation}
	The present slow-diffusion assumptions
	$0\le\theta\le\Theta$ and $\theta'\le0$ are therefore stronger as
	conditions on $F$; they do not enlarge Yau's admissible constitutive class.
	The gain lies elsewhere: the estimate is localized on complete manifolds with
	$\Ric\ge-Kg$, is obtained from a distributional positive-part inequality by
	Nash--Moser iteration, and applies explicitly to the non-power crossover laws
	above.  For $F(u)=u^m$, Yau's lower threshold becomes
	$m\ge1-2/n$, which agrees with the familiar supercritical FDE range.

	Jian-Hua Hao\\
	\emph{E-mail:} haojianhua@sxu.edu.cn\\
	School of Mathematics and Statistics, Shanxi University, Taiyuan 030006, Shanxi, China.

	Yu-Zhao Wang\\
	\emph{E-mail:} wangyuzhao@sxu.edu.cn\\
	School of Mathematics and Statistics, Shanxi University, Taiyuan 030006, Shanxi, China.
	\normalsize

\end{document}